\newtheorem{precor}{{\bf Corollary}}
\newenvironment{cor}{\begin{precor}{\hspace{-0.5
               em}{\bf.\ }}}{\end{precor}}
\newtheorem{prealphcor}{{\bf Corollary}}
\newtheorem{precon}{{\bf Conjecture}}
\newtheorem{prealphcon}{{\bf Conjecture}}
\newenvironment{alphcon}{\begin{prealphcon}{\hspace{-0.5
               em}{\bf.\ }}}{\end{prealphcon}}
\newtheorem{predefin}{{\bf Definition}}
\newtheorem{preexm}{{\bf Example}}
\newtheorem{preappl}{{\bf Application}}
\newtheorem{prelem}{{\bf Lemma}}
\newenvironment{lem}{\begin{prelem}{\hspace{-0.5
               em}{\bf.\ }}}{\end{prelem}}
\newtheorem{preproof}{{\bf Proof.\ }}
\newenvironment{proof}[1]{\begin{preproof}{\rm
               #1}\hfill{$\blacksquare$}}{\end{preproof}}
\newtheorem{prethm}{{\bf Theorem}}
\newenvironment{thm}{\begin{prethm}{\hspace{-0.5
               em}{\bf.\ }}}{\end{prethm}}
\newtheorem{prealphthm}{{\bf Theorem}}
\newenvironment{alphthm}{\begin{prealphthm}{\hspace{-0.5
               em}{\bf.\ }}}{\end{prealphthm}}
\newtheorem{prealphlem}{{\bf Lemma}}
\newtheorem{prealphprb}{{\bf Problem}}
\newenvironment{alphprb}{\begin{prealphprb}{\hspace{-0.5
               em}{\bf.\ }}}{\end{prealphprb}}
\newtheorem{prepro}{{\bf Proposition}}
\newenvironment{pro}{\begin{prepro}{\hspace{-0.5
               em}{\bf.\ }}}{\end{prepro}}
\newtheorem{preprb}{{\bf Problem}}
\newtheorem{prerem}{{\bf Remark}}
\newtheorem{preapp}{{\bf Application}}
\newtheorem{prequ}{{\bf Question}}
\newenvironment{qu}{\begin{prequ}{\hspace{-0.5
               em}{\bf.\ }}}{\end{prequ}}
\newtheorem{preclaim}{{\bf Claim}}
\def\conct[#1,#2]{\mbox {${#1} \leftrightarrow {#2}$}}
\def\dconct[#1,#2]{\mbox {${#1} \rightarrow {#2}$}}
\def\deg[#1,#2]{\mbox {$d_{_{#1}}(#2)$}}
\def\mindeg[#1]{\mbox {$\delta_{_{#1}}$}}
\def\maxdeg[#1]{\mbox {$\Delta_{_{#1}}$}}
\def\outdeg[#1,#2]{\mbox {$d_{_{#1}}^{^+}(#2)$}}
\def\minoutdeg[#1]{\mbox {$\delta_{_{#1}}^{^+}$}}
\def\maxoutdeg[#1]{\mbox {$\Delta_{_{#1}}^{^+}$}}
\def\indeg[#1,#2]{\mbox {$d_{_{#1}}^{^-}(#2)$}}
\def\minindeg[#1]{\mbox {$\delta_{_{#1}}^{^-}$}}
\def\maxindeg[#1]{\mbox {$\Delta_{_{#1}}^{^-}$}}
\def\dre[#1,#2,#3]{\mbox {${\cal E}^{^{#3}}(#1,#2)$}}
\def\var[#1,#2]{\mbox {${\rm Var}_{_{#1}}(#2)$}}
\def\ls[#1]{\mbox {$\xi^{^{#1}}$}}
\def\hom[#1,#2]{\mbox {${\rm Hom}({#1},{#2})$}}
\def\onvhom[#1,#2]{\mbox {${\rm Hom^{v}}(#1,#2)$}}
\def\onehom[#1,#2]{\mbox {${\rm Hom^{e}}(#1,#2)$}}
\def\core[#1]{\mbox {$#1^{^{\bullet}}$}}
\def\cay[#1,#2]{\mbox {${\rm Cay}({#1},{#2})$}}
\def\sch[#1,#2,#3]{\mbox {${\rm Sch}({#1},{#2},{#3})$}}
\def\cays[#1,#2]{\mbox {${\rm Cay_{s}}({#1},{#2})$}}
\def\dirc[#1]{\mbox {$\stackrel{\rightarrow}{C}_{_{#1}}$}}
\def\cycl[#1]{\mbox {${\bf Z}_{_{#1}}$}}
\begin{document}
\footnotetext[1]{The research of Hossein Hajiabolhassan is supported by ERC advanced grant GRACOL.}
\begin{center}
{\Large \bf  Chromatic Number Via Tur\'an Number}\\
\vspace{0.3 cm}
{\bf Meysam Alishahi$^\dag$ and Hossein Hajiabolhassan$^\ast$\\
{\it $^\dag$ School of Mathematical Sciences}\\
{\it University of Shahrood, Shahrood, Iran}\\
{\tt meysam\_alishahi@shahroodut.ac.ir}\\
{\it $^\ast$ Department of Applied Mathematics and Computer Science}\\
{\it Technical University of Denmark}\\
{\it DK-{\rm 2800} Lyngby, Denmark}\\
{\it $^\ast$ Department of Mathematical Sciences}\\
{\it Shahid Beheshti University, G.C.}\\
{\it P.O. Box {\rm 19839-69411}, Tehran, Iran}\\
{\tt hhaji@sbu.ac.ir}\\
}
\end{center}
\begin{abstract}
\noindent
A Kneser representation ${\rm KG}({\cal H})$ for a graph $G$ is a
bijective assignment of hyperedges of a hypergraph ${\cal H}$ to the vertices of $G$ such that
two vertices of $G$ are adjacent if and only if the corresponding hyperedges are disjoint.
In this paper, we introduce a colored version of the Tur\'an number and use that to
determine the chromatic number of some families of graphs in terms of the generalized Tur\'an number of graphs.
In particular, we determine the chromatic number of every  Kneser multigraph
${\rm KG}({\cal H})$, where the vertex set of ${\cal H}$ is the edge set of
a multigraph $G$ such that the multiplicity of each edge is greater than $1$ and a
hyperedge in ${\cal H}$ corresponds to
a subgraph of $G$
isomorphic to some graph in a fixed prescribed family of simple graphs.\\

\noindent {\bf Keywords:}\ { Chromatic Number, General Kneser Hypergraph, Tur\'an Number.}\\
{\bf Subject classification: 05C15}
\end{abstract}
\section{Introduction}
In this paper, we investigate the chromatic number of graphs. It is a known fact that any graph $G$ has several
Kneser representations. A  {\it Kneser representation} ${\rm KG}({\cal H})$ for a graph $G$ is a bijective assignment of
hyperedges of a hypergraph ${\cal H}$ to the vertices of
$G$ such that two vertices of $G$ are adjacent if and only if the corresponding hyperedges are disjoint. In~\cite{2013arXiv1302.5394A},
in view of Kneser representations of Kneser hypergraphs,
the present authors introduced some lower bounds for their chromatic numbers.
In this regard, for a graph $G$ and for any $1\leq i\leq \chi(G)$, the {\it $i^{th}$ altermatic number }of $G$ was defined as a lower bound for the chromatic number of $G$. Also it was shown that
the $i^{th}$ altermatic number
provides a tight lower bound for the chromatic number. In~\cite{2013arXiv1306.1112M}, It was shown that it is a hard problem to
compute the altermatic number of hypergraphs. Although, we show that
one can evaluate the chromatic number of some families of graphs via their altermatic number.
A graph has various Kneser representations and a hard task to find a suitable lower bound for the altermatic number of a graph is to consider an appropriate representation for that graph.
If we consider a Kneser representation ${\rm KG}({\cal H})$ for a graph,
then one can present some lower and upper bounds for the chromatic number of this graph in terms of the number of vertices of ${\cal H}$ and the independence number of ${\cal H}$.
It is known that the {\it covering number}, i.e., the minimum number of vertices of ${\cal H}$ which meet each
hyperedge of ${\cal H}$, is an upper bound for the chromatic number of $G$. Several interesting results or conjectures related
to the chromatic number of hypergraphs can be reformulated in terms of covering number or  generalized Tur\'an number.
In this paper, we introduce a colored version of the Tur\'an number and use that to
present a lower bound for the chromatic number of graphs.  Moreover,
we determine the chromatic number of some families of graphs in terms of the generalized Tur\'an number.
In this regard, we determine the chromatic number of
some families of path graphs and Kneser multigraphs.\\

This paper is organized as follows.
In the first section, we set up notations and terminologies. In particular, we define the alternating generalized Tur\'an number as a
generalization of the generalized Tur\'an number which provides a lower bound for chromatic number of graphs.
Also, we introduce several
Kneser representations for some well-known families of graphs. In the second section, first we introduce
some lower and upper bounds for chromatic number in terms of the generalized Tur\'an number.
Also, we show that there is a tight relationship between the charomatic number and the generalized Tur\'an number.
Next, we determine the exact value of the chromatic number of every  Kneser multigraph
${\rm KG}({\cal H})$, where the vertex set of the hypergraph ${\cal H}$ is the edge set of
a multigraph $G$ where the multiplicity of each edge is greater than $1$
and hyperedges in ${\cal H}$ correspond to
all subgraphs of $G$ each
isomorphic to some fixed prescribed simple graphs. Moreover, we evaluate the chromatic number of a family of path graphs. In particular, we shall see that
the chromatic number of path graphs
lies between the lower bound and upper bound given in terms of the generalized Tur\'an number.
\subsection{Notations}
First, in this section, we setup some notation and terminology.
Hereafter, the symbol $[n]$ stands for the set $\{1,\ldots, n\}$.
A ({\it multi})~{\it hypergraph} ${\cal H}$ is an ordered pair $(V({\cal H}),E({\cal H}))$,
where $V({\cal H})$ is a finite set, called the set of {\it vertices} of ${\cal H}$, and
$E({\cal H})$ is a family of nonempty
subsets of $V({\cal H})$, called the set of {\it hyperedges} of ${\cal H}$.
The {\it multiplicity} of a hyperedge $e$ is the number of
multiple hyperedges which contain the same vertices as $e$.
Let $N=(N_1,N_2,\ldots,N_r)$, where $N_i$'s are pairwise disjoint subsets of $V({\cal H})$.
The {\it induced hypergraph} ${\cal H}_{|_N}$ has $\displaystyle\cup_{i=1}^rN_i$ and
$\{A\in E({\cal H}):\ \exists i;\ 1\leq i\leq r, A\subseteq N_i\}$ as
vertex set and hyperedge set, respectively.
A {\it hypergraph homomorphism} from ${\cal H}$ to a hypergraph ${\cal F}$ is a map from
the vertex set of ${\cal H}$ to that of ${\cal F}$ such that the image of any hyperedge
of ${\cal H}$ contains some hyperedges of ${\cal F}$.
A {\it $t$-coloring} of ${\cal H}$ is a mapping
$h:V({\cal H})\longrightarrow [t]=\{1,2,\ldots,t\}$ with no monochromatic hyperedge.
Also, the {\it chromatic number}  $\chi({\cal H})$ of ${\cal H}$ is the least positive integer $t$ (the number of colors) such that
there exists a $t$-coloring for ${\cal H}$.
If ${\cal H}$ has some hyperedge of size $1$, then we define its chromatic number to be infinite.
For any hypergraph ${\cal H}=(V({\cal H}),E({\cal H}))$ and positive integer $r\geq 2$,
the {\it general Kneser hypergraph}
${\rm KG}^r({\cal H})$  is an $r$-uniform hypergraph whose vertex set is $E({\cal H})$ and whose hyperedge
set consists of all $r$-tuples of pairwise disjoint hyperedges of ${\cal H}$.
For simplicity of notation, when  $r=2$, the Kneser graph ${\rm KG}^2({\cal H})$
is shown by ${\rm KG}({\cal H})$.

A subset $S \subseteq [n]$ is called {\it $s$-stable}
if any two distinct elements of $S$ are at least
``at distance $s$ apart'' on the $n$-cycle,
that is, $s\leq |i-j|\leq n-s$
for distinct $i,j\in S$.
For a subset $A\subseteq [n]$,
the symbols ${A\choose k}$ and ${A\choose k}_s$ stand
for the set of all $k$-subsets of $A$ and the set of all $s$-stable $k$-subsets of $A$, respectively.
For ${\cal H}_1=([n],{[n]\choose k})$ and ${\cal H}_2=([n],{[n]\choose k}_2)$, two graphs
${\rm KG}({\cal H}_1)$ and  ${\rm KG}({\cal H}_2)$ are termed and denoted by
the {\it  Kneser graph} ${\rm KG}(n,k)$ and the {\it Schrijver graph} ${\rm SG}(n,k)$, respectively.
Also, the generalized Kneser graph ${\rm KG}(n,k,t)$ has ${[n]\choose k}$ as vertex set and two
vertices are adjacent if the size of intersection of corresponding sets is at most $t$. Furthermore, the notations
$C_n, P_n, K_n, K_{m,n}$, and $rK_2$ stand for the {\it $n$-cycle}, the {\it path} with length $n$, i.e., $n+1$ vertices, the {\it complete graph} with $n$ vertices, the complete bipartite graph, and the {\it matching} of size $r$, respectively.
The {\it circular complete graph} $K_{n\over d}$ has $[n]$ as  vertex set and two vertices $i$ and $j$
are adjacent if $d\leq |i-j| \leq n-d$. Circular complete graphs can be considered as a generalization of
complete graphs and they have been studied in the literature,
see~\cite{MR1815614}.

Let $m, n,$ and $r$ be positive integers, where $r\leq m, n$.
For an $r$-subset $A\subseteq [m]$ and an injective map $f:A\longrightarrow [n]$, the ordered pair
$(A,f)$ is said to be an $r$-{\it partial permutation} \cite{MR2892478}.
Let $V_r(m, n)$ denote the set of all $r$-partial permutations.
Two partial permutations $(A, f)$ and $(B, g)$ are said to be {\it intersecting}, if there exists an
$x\in A\cap B$ such that $f(x) = g(x)$.
Note that $V_n(n, n)$ is the set of all $n$-permutations. The {\it permutation graph} $S_r(m, n)$
has $V_r(m, n)$ as vertex set and two $r$-partial permutations are adjacent
if and only if they are~not intersecting.
The structure of maximum independent sets of $S_r(m,n)$ was studied in
several papers, see \cite{MR2009400, MR2489272,MR2202076}.
\subsection{Kneser Representation}
A {\it Kneser representation} for a graph $G$ is an assignment of subsets of a ground set to the vertices of $G$ such that it
assigns distinct subsets to the vertices of $G$ and it satisfies {\it disjoint property}, i.e.,
two vertices are adjacent if and only if the corresponding sets are disjoint.
For more about Kneser representation see~\cite{MR2519945, 2012arXiv1210.6965J}.

For any hypergraph ${\cal H}$ and a family ${\cal L}$ of hypergraphs,
${{\cal H}\choose {\cal L}}$ is a hypergraph whose vertex set is $E({\cal H})$ and whose hyperedge set consists of the hyperedge set of any
subhypergraph of ${\cal H}$ isomorphic to a member of ${\cal L}$.
Hereafter, by abuse of notation, we show the general Kneser hypergraph $\displaystyle{\rm KG}^r\left({{\cal H}\choose {\cal L}}\right)$
by ${\rm KG}^r({\cal H}, {\cal L})$.  Furthermore, for ${\cal L}=\{{\cal F}\}$, where ${\cal F}$ is a hypergraph,
we write ${\rm KG}({\cal H}, {\cal F})$ instead of ${\rm KG}({\cal H}, \{{\cal F}\})$. Also, for simplicity of notation, when  $r=2$, the graph ${\rm KG}^2({\cal H}, {\cal L})$ is shown by
${\rm KG}({\cal H}, {\cal L})$.

Let ${\cal F}=(V({\cal F}), E({\cal F}))$ be a
subhypergraph of ${\cal H}$.
Clearly, the set $V({\cal F})\setminus {\displaystyle\cup_{T\in E(F)}}T$ consists of all {\it isolated vertices} in ${\cal F}$.  If we set
$V'={\displaystyle\cup_{T\in E({\cal F})}}T$, $E'=E({\cal F})$, and ${\cal F}'=(V', E')$, then one can see that
the general Kneser hypergraphs ${\rm KG}^r({\cal H}, {\cal F})$ and  ${\rm KG}^r({\cal H}, {\cal F}')$ are isomorphic.
Hence, for any general Kneser graph ${\rm KG}({\cal H}, {\cal L})$, we may assume that any ${\cal F}\in {\cal L}$ has no isolated vertex. In contrast, we allow the hypergraph ${\cal H}$ to
have some isolated vertices. In fact, we show that
the isolated vertices of a hypergraph ${\cal H}$ may help to present an appropriate lower bound for the chromatic number of the general Kneser hypergraph ${\rm KG}({\cal H}, {\cal L})$.

Here, we introduce some Kneser
representations for some families of well-know graphs:

\begin{enumerate}
\item The Kneser graph ${\rm KG}(n,k)$ ($n\geq 2k$) is isomorphic to ${\rm KG}(nK_2, kK_2)$.

\item The Schrijver graph ${\rm SG}(n,k)$ ($n\geq 2k$) is isomorphic to ${\rm KG}(C_n, kK_2)$.

\item The circular complete graph $K_{n\over d}$ ($n\geq 2d$) is isomorphic to ${\rm KG}(C_n, P_d)$.

\item The generalized Kneser graph  ${\rm KG}(n,k,s)$ ($n\geq k> s$) is isomorphic to
${\rm KG}(K_n^{s+1},K_k^{s+1})$,
where the complete hypergraph $K_n^s$ consists of all $s$-subsets of $[n]$.

\item The permutation graph  $S_r(m,n)$ ($m,n\geq r$) is isomorphic to  ${\rm KG}(K_{m,n}, rK_2)$.
\end{enumerate}
\subsection{Generalized Tur\'an Number}
Throughout this section, let ${\cal H}$ be a finite (multi) hypergraph
and ${\cal F}$ be a family of (multi)
hypergraphs. A hypergraph is called ${\cal F}$-free, if it
has no member of ${\cal F}$ as a subhypergraph.  The maximum number of
hyperedges of an ${\cal F}$-free spanning subhypergraph of ${\cal H}$ (a subhypergraph of ${\cal H}$ with the same vertices as
${\cal H}$)
is denoted by ${\rm ex}({\cal H}, {\cal F})$.
An ${\cal F}$-free spanning subhypergraph of ${\cal H}$ with exactly ${\rm ex}({\cal H}, {\cal F})$ hyperedges is called ${\cal F}$-extremal.
We denote the family of all ${\cal F}$-extremal subhypergraphs of ${\cal H}$ with ${\rm EX}({\cal H}, {\cal F})$.
It is known that ${\rm ex}(K_n, K_3)=\lfloor {n^2 \over 4}\rfloor$. It is usually a hard problem to determine the exact value of ${\rm ex}({\cal H}, {\cal F})$.

Let $\sigma=(e_1, e_2, \ldots, e_t)$ be an ordering of the hyperedges of ${\cal H}$, where $t=|E({\cal H})|$. An alternating $2$-coloring of $E({\cal H})$ of length $l$ with respect to the ordering $\sigma$, or simply, an alternating $2$-coloring of length $l$, is an assignment of two colors blue and red to exactly $l$ hyperedges of ${\cal H}$ such that any two consecutive colored hyperedges (with respect to the ordering $\sigma$) have different colors. For instance, let $\sigma=(e_1, e_2, e_3, e_4, e_5, e_6)$ and $\sigma'=(e_2, e_1, e_3,e_4,  e_5, e_6)$ be two orderings of the edges of the complete graph $K_4$. Also, let $G$ be  a subgraph of $K_4$, where $E(G)=\{e_1,e_2, e_4,e_6\}$. One can check that the coloring $f:\ E(G)\rightarrow \{B,R\}$, where $f(e_1)=R, f(e_2)=B, f(e_4)=R$, and $f(e_6)=B$,
is an alternating $2$-coloring
of $E(K_4)$ of  length $4$ with respect to the ordering $\sigma$. But this coloring is~not an alternating $2$-coloring of $E(K_4)$  of  length $4$ with respect to the ordering $\sigma'$.

For a given $2$-coloring of a subset of hyperedges of ${\cal H}$, a hyperedge with color red (resp. blue) is called a red (resp. blue) hyperedge. Moreover, the {\it red subhypergraph} ${\cal H}^R$ (resp. {\it blue subhypergraph} ${\cal H}^B$) of $H$ is
a spanning subhypergraph of ${\cal H}$
whose hyperedge set
consists of all red (resp. blue)  hyperedges. For instance, in the above-mentioned
example, for the alternating coloring with respect to the ordering $\sigma$, we have $E({\cal H}^R)=\{e_1,e_4\}$ and
$E({\cal H}^B)=\{e_2,e_6\}$.

For an ordering $\sigma$ of $E({\cal H})$,
the maximum possible length of an alternating $2$-coloring of $E({\cal H})$ with respect to the ordering
$\sigma$ such that both of the corresponding red and  blue subhypergraphs
(resp. at least one of the red subhypergraph and the blue subhypergraph) are
 ${\cal F}$-free is denoted by
${\rm ex}_{alt}({\cal H}, {\cal F},\sigma)$ (resp. ${\rm ex}_{salt}({\cal H}, {\cal F},\sigma)$).

Set
$${\rm ex}_{alt}({\cal H}, {\cal F})=\min\{{\rm ex}_{alt}({\cal H}, {\cal F},\sigma):\sigma\ is\ an\ ordering\ of\ E({\cal H})\}.$$
$${\rm ex}_{salt}({\cal H}, {\cal F})=\min\{{\rm ex}_{salt}({\cal H}, {\cal F},\sigma):\sigma\ is\ an\ ordering\ of\ E({\cal H})\}.$$
Note that if we assign alternatively two colors red and blue to the hyperedge set of a member of ${\rm EX}({\cal H}, {\cal F})$
with respect to an arbitrary ordering
$\sigma$, then one can conclude that ${\rm ex}({\cal H}, {\cal F}) \leq {\rm ex}_{alt}({\cal H}, {\cal F},\sigma)$.
Here we present an example to show equality holds.
Consider the complete graph $K_4$ and let $E(K_4)=\{e_1,e_2,e_3,e_4,e_5,e_6\}$
such that $\{e_{2i-1},e_{2i}\}$  forms a
matching for any $1\leq i \leq 3$. One can see that for the path $P_2$, we have  ${\rm ex}(K_4, P_2)=2$.
Also, one can check that for the ordering
$\sigma=(e_1, e_2, e_3, e_4, e_5, e_6)$, we have ${\rm ex}_{alt}(K_4, P_2,\sigma)=2$.
Consequently, ${\rm ex}(K_4, P_2)={\rm ex}_{alt}(K_4, P_2)=2$. Also, it is straightforward to
see that
${\rm ex}_{alt}({\cal H}, {\cal F},\sigma) \leq 2  {\rm ex}({\cal H}, {\cal F})$. Accordingly,
$${\rm ex}({\cal H}, {\cal F})\leq {\rm ex}_{alt}({\cal H}, {\cal F}) \leq 2  {\rm ex}({\cal H}, {\cal F}).$$
Similarly, one can see that ${\rm ex}({\cal H}, {\cal F})+1\leq {\rm ex}_{salt}({\cal H}, {\cal F}) \leq 2  {\rm ex}({\cal H}, {\cal F})+1.$
\subsection{Altermatic Number}
Let  $x_1,x_2,\ldots,x_n$ be a sequence of $\{-1,+1\}$.
The subsequence $x_{j_1}, x_{j_2},\ldots,x_{j_m}$ (${j_1}<{j_2}<\cdots<{j_m}$)
is said to be an {\it alternating subsequence} if any two consecutive terms in this subsequence are distinct, i.e., $x_{j_i}x_{j_{i+1}}<0$, for each $i\in\{1,2,\ldots,m-1\}$. For an $X=(x_1,x_2,\ldots,x_n)\in\{-1,0,+1\}^n\setminus \{(0,0,\ldots,0)\}$,
 the length of a longest alternating subsequence of nonzero terms in $X$ is denoted by $alt(X)$.
For instance, if $X=(+1,-1,0, -1,0,+1,+1,-1)$, then $alt(X)=4$.

One can consider the set of vectors of $\{-1,0,+1\}^n\setminus \{(0,0,\ldots,0)\}$ as the set of all
{\it signed subsets} of $[n]$, that is, the
family of all $(X^+,X^-)$ of disjoint subsets of $[n]$. Precisely,
for any vector $X=(x_1,x_2,\ldots,x_n)\in\{-1,0,+1\}^n\setminus \{(0,0,\ldots,0)\}$, we define
$$X^+=\{i\in [n]:\ x_i=+1\}, \quad X^-=\{i\in [n]:\ x_i=-1\}.$$
Note that a vector $X$  determines uniquely $X^+$ and $X^-$ and vise versa. Therefore, by abuse of notation, we can set $X=(X^+,X^-)$. Throughout this paper, for any $X=(x_1,x_2,\ldots,x_n)\in\{-1,0,+1\}^n\setminus \{(0,0,\ldots,0)\}$,
we use these representations interchangeably, i.e., $X=(x_1,x_2,\ldots,x_n)$ or $X=(X^+,X^-)$.
For a ground set $V=\{v_1,v_2,\ldots,v_n\}$, denote the set of \emph{linear orderings} of $V$ by $L(V)$ and let  $\sigma: v_{i_1}<v_{i_2}<\cdots<v_{i_n}$ be a linear ordering of $V$.
For any $X\in\{-1,0,+1\}^n\setminus\{(0,0,\ldots,0)\}$, define
$$X^+_\sigma=\{v_{i_j}:\ j\in[n]\ \&\  x_j=+1\}, \quad  X^-_\sigma=\{v_{i_j}:\ j\in[n]\ \&\  x_j=-1\},$$
and $X_\sigma=(X^+_\sigma,X^-_\sigma)$.

Let ${\cal H}=(V,E)$ be a hypergraph and $\sigma\in L_V$  be a linear ordering. For any positive integer $i$, set $alt_{\sigma}({\cal H},i)$ to be the largest integer $k$ such that there
exists an $X\in\{-1,0,+1\}^n\setminus \{(0,0,\ldots,0)\}$
with $alt(X)=k$  and that the hypergraph ${\cal H}_{|X_\sigma}$ has chromatic number at most $i-1$.
One can see that $alt_{\sigma}({\cal H},1)$ is the largest integer $k$ such that there
exists an $X\in\{-1,0,+1\}^n\setminus \{(0,0,\ldots,0)\}$
with $alt(X)=k$  and that none  of  $X^+_\sigma$ and  $X^-_\sigma$
 contains any hyperedge of ${\cal H}$.
If for each $X\in\{-1,0,+1\}^n\setminus \{(0,0,\ldots,0)\}$ either $X^+_\sigma$ or $X^-_\sigma$ has
some hyperedges of ${\cal H}$, i.e., every singleton is a hyperedge of ${\cal H}$,  then we define $alt_{\sigma}({\cal H},1)=0$.
Set $alt({\cal H},i)=\min\{alt_\sigma({\cal H},i):\ \sigma\in L_V\}$.
Now we are ready to define the {\it $i^{\rm th}$ altermatic number}  of a graph $G$ as follows.
$$
\zeta(G,i)=\displaystyle\max_{{\cal H}} \{|V({\cal H})|-alt({\cal H},i)+i-1: {\rm KG}({\cal H})\longleftrightarrow G\}.
$$
It should be mentioned that the $i^{th}$ alternation number of graphs was defined in \cite{2013arXiv1302.5394A} in a different way.
In what follows, we show that these definitions are equivalent.
In {\rm \cite{2013arXiv1302.5394A}}, the present authors defined the {\it $i^{\rm th}$ alternation number} for graphs as follows.
For a hypergraph ${\cal H}=([n],E)$, a vector $X\in\{-1,0,+1\}^n\setminus\{(0,0,\ldots,0)\}$, and a linear ordering $\sigma: i_1<i_2<\cdots<i_n$ of $[n]$,
set $alt_\sigma(X)=alt(x_{i_1},x_{i_2},\ldots,x_{i_n})$. Note that for $I:1<2<\cdots<n$, we have
$X^+_I=X^+$ and $X^-_I=X^-$. Moreover, for any positive integer $i$,
set $alt'_{\sigma}({\cal H},i)$ to be the largest integer $k$ such that there
exists an $X=(x_1,x_2,\ldots,x_n)\in\{-1,0,+1\}^n\setminus \{(0,0,\ldots,0)\}$
with $alt((x_{i_1},x_{i_2},\ldots,x_{i_n}))=k$  and that the chromatic number of
hypergraph ${\rm KG}({\cal H}_{|X})$ is at most $i-1$.
Now define $alt'({\cal H},i)=\min\{alt'_\sigma({\cal H},i):\ \sigma\in L_{[n]}\}$.
\begin{lem}
For any hypergraph ${\cal H}=([n],E)$ and positive integer $i$,
we have $alt({\cal H},i)=alt'({\cal H},i)$.
\end{lem}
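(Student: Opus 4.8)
The plan is to prove the stronger, pointwise assertion that $alt_\sigma(\mathcal{H},i)=alt'_\sigma(\mathcal{H},i)$ for \emph{every} linear ordering $\sigma$ of $[n]$. Since $alt(\mathcal{H},i)$ and $alt'(\mathcal{H},i)$ are obtained by minimizing these two quantities over the same index set $L_{[n]}$, the lemma follows immediately from such a pointwise equality. Fix an ordering $\sigma:i_1<i_2<\cdots<i_n$ and let $\pi$ be the permutation of $[n]$ defined by $\pi(j)=i_j$. The entire argument is a single change of variables on $\{-1,0,+1\}^n\setminus\{\mathbf{0}\}$ governed by $\pi$.

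First I would introduce the coordinate-permuting bijection $\Phi_\sigma$ that sends $X=(x_1,\dots,x_n)$ to the vector $Y=(y_1,\dots,y_n)$ determined by $y_{i_j}=x_j$ for all $j\in[n]$; equivalently $y_\ell=x_{\pi^{-1}(\ell)}$. This is a bijection of $\{-1,0,+1\}^n$ preserving the nonzero vectors, so maximizing over $X$ is the same as maximizing over $Y=\Phi_\sigma(X)$. Two identities drive the proof. The first is that $Y^+=X^+_\sigma$ and $Y^-=X^-_\sigma$: this is immediate once one tracks indices through $\pi$, since $y_\ell=+1$ with $\ell=i_j$ means $x_j=+1$, i.e. $\ell\in\{i_j:x_j=+1\}=X^+_\sigma$, and similarly for the negative part. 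Consequently the induced subhypergraphs coincide as \emph{identical} hypergraphs, $\mathcal{H}_{|Y}=\mathcal{H}_{|X_\sigma}$. The second identity is $alt_\sigma(Y)=alt(y_{i_1},\dots,y_{i_n})=alt(x_1,\dots,x_n)=alt(X)$, because applying the $\sigma$-reordering to $Y$ literally reconstitutes the natural-order sequence of $X$.

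With these in hand the two optimization problems line up term by term. The feasibility constraint defining $alt_\sigma(\mathcal{H},i)$, namely that $\mathcal{H}_{|X_\sigma}$ be colorable with at most $i-1$ colors, is by the first identity exactly the constraint on $\mathcal{H}_{|Y}$ used in $alt'_\sigma(\mathcal{H},i)$, while by the second identity the objective $alt(X)$ of the former equals the objective $alt_\sigma(Y)$ of the latter. As $X\mapsto Y$ is a bijection of the domain, the two maxima range over matching feasible sets with equal objective values, forcing $alt_\sigma(\mathcal{H},i)=alt'_\sigma(\mathcal{H},i)$; minimizing over $\sigma$ then yields $alt(\mathcal{H},i)=alt'(\mathcal{H},i)$.

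I expect the only genuine work to be the index bookkeeping behind the two identities: keeping $\pi$ and $\pi^{-1}$ straight and confirming that the alternation length is preserved exactly, not merely bounded, under the paired reorderings. A last small point is the boundary case $i=1$, where $\chi\le 0$ must be read as ``the induced subhypergraph contains no hyperedge''; this needs no separate treatment, since the first identity transports the constraint verbatim. The same remark resolves the apparent clash between phrasing the colorability condition through $\mathcal{H}_{|\cdot}$ and through its Kneser graph ${\rm KG}(\mathcal{H}_{|\cdot})$: because $\mathcal{H}_{|X_\sigma}$ and $\mathcal{H}_{|Y}$ are the \emph{same} hypergraph, any invariant of it takes the same value on both sides, so the matching goes through whichever reading is intended.
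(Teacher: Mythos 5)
Your proof is correct and uses essentially the same idea as the paper: the coordinate-permuting bijection $X\mapsto Y$ with $y_{i_j}=x_j$, together with the two identities $Y^+=X^+_\sigma$, $Y^-=X^-_\sigma$ and $alt(y_{i_1},\ldots,y_{i_n})=alt(X)$. Your packaging is in fact a little cleaner than the paper's: you get the pointwise equality $alt_\sigma({\cal H},i)=alt'_\sigma({\cal H},i)$ for each fixed $\sigma$ and then minimize once, whereas the paper argues the two inequalities separately and routes the second through an auxiliary ordering $\gamma$ built from $\beta^{-1}$, which adds index bookkeeping without changing the substance.
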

\begin{proof}{
Consider an arbitrary ordering $\sigma: i_1<i_2<\cdots<i_n \in L_{[n]}$.
Let $alt_\sigma(F,i)=k$.
In view of the definition of $alt_\sigma(F,i)$, there is an $X=(x_1,x_2,\ldots,x_n)\in\{+1,0,-1\}^n\setminus\{(0,0,\ldots,0)\}$
such that $alt(X)=k$ and the chromatic number of
the hypergraph ${\rm KG}(F_{|X_\sigma})$ is at most $i-1$.
Define $\beta\in S_n$ such that $\beta(j)=i_j$ for $j=1,2,\ldots,n$.
Now consider $\gamma:\beta^{-1}(1)<\beta^{-1}(2)<\cdots<\beta^{-1}(n)\in L_{[n]}$ and
let
$$Y=(y_1,y_2,\ldots,y_n)=(x_{\beta^{-1}(1)},x_{\beta^{-1}(2)},\ldots,x_{\beta^{-1}(n)}).$$
Note that $(y_{i_1},y_{i_2},\ldots,y_{i_n})=X$ and so
$alt((y_{i_1},y_{i_2},\ldots,y_{i_n}))=alt(X)=k.$
Also, we have
$$Y^+=\{i\in[n]:\ y_i=+1\}=\{i\in[n]:\ x_{\beta^{-1}(i)}=+1\}=\{\beta(j)\in[n]:\ x_j=+1\}=X^+_\sigma$$
and similarly, $Y^-=X^-_\sigma$. Accordingly, we have ${\rm KG}(F_{|Y})={\rm KG}(F_{|X_\sigma})$. Therefore,
the chromatic number of
the hypergraph ${\rm KG}(F_{|Y})$ is at most $i-1$.
It implies $alt'_\gamma(F,i)\geq alt_\sigma(F,i)$; and consequently, since
$\sigma$ is an arbitrary ordering, we have $alt'(F,i)\geq alt(F,i)$.

Now, let $\gamma: i_1<i_2<\cdots<i_n \in L_{[n]}$ be an arbitrary ordering of $[n]$.
Assume that $alt'_\gamma(F,i)=k'$.
In view of the definition of $alt'_\gamma(F,i)$, there exists an $X=(x_1,x_2,\ldots,x_n)\in\{+1,0,-1\}^n\setminus\{(0,0,\ldots,0)\}$
such that $alt((x_{i_1},x_{1_2},\ldots,x_{i_n}))=k'$ and the chromatic number of
the hypergraph ${\rm KG}(F_{|X})$ is at most $i-1$.
Now, let $Z=(z_1,z_2,\ldots,z_n)=(x_{i_1},x_{i_2},\ldots,x_{i_n})$.
Note that $alt(Z)=k'$,
$$Z^+_\gamma=\{i_j:\ z_j=+1\}=\{i_j:\ x_{i_j}=+1\}=X^+,$$
and similarly $Z^-_\sigma=X^-$. Hence,
we have $${\rm KG}(F_{|X})={\rm KG}(F_{|Z_\gamma}).$$
Therefore, the chromatic number of
the hypergraph ${\rm KG}(F_{|Y_\gamma})$ is at most $i-1$.
It implies that $alt_\gamma(F,i)\geq alt'_\gamma(F,i)$.
Since $\gamma$ is an arbitrary ordering, we have $alt(F,i)\geq alt'(F,i)$
which completes the proof.
}\end{proof}

Now, we are in a position to introduce a lower bound for chromatic number of any graph $G$
in terms of its altermatic number. The next theorem was expressed
in terms of  $alt'(F,i)=alt(F,i)$ in \cite{2013arXiv1302.5394A}.
\begin{alphthm}\label{generalsalt}{\rm \cite{2013arXiv1302.5394A}}
For any graph $G$ and any positive integer $i\leq \chi(G)+1$, we have
$$\chi(G) \geq \zeta(G,i).$$
\end{alphthm}

In view of  simplicity, we define two parameters which help us to determine the chromatic number of some families of graphs.
For a hypergraph ${\cal H}=(V,E)$ and a linear ordering $\sigma:v_1<v_2<\cdots<v_n\in L_V$,
we set $alt_{\sigma}({\cal H})$ (resp. $salt_{\sigma}({\cal H})$) to be the largest integer $k$ such that there
exists an $X\in\{-1,0,+1\}^n\setminus \{(0,0,\ldots,0)\}$
with $alt(X)=k$  and that none (resp. at most one) of  $X^+_\sigma$ and  $X^-_\sigma$
contains any (resp. some) hyperedge of ${\cal H}$. Note that $alt_{\sigma}({\cal H})=alt_{\sigma}({\cal H},1)$ and
$alt_{\sigma}({\cal H},2) \leq salt_{\sigma}({\cal H})$. Also,
$alt_{\sigma}({\cal H}) \leq alt_{\sigma}({\cal H},2) \leq salt_{\sigma}({\cal H}) $ and the equality can hold. For instance, one can see that for $k\geq 2$ and $I:1<2<\cdots<n$,
$alt_{I}({[n]\choose k}_{2})=salt_{I}({[n]\choose k}_{2})=2(k-1)+1$.
Now, set $alt({\cal H})=\min\{alt_{\sigma}({\cal H});\ \sigma\in L_{V({\cal H})}\}$ and $salt({\cal H})=\min\{salt_{\sigma}({\cal H});\ \sigma\in L_{V({\cal H})}\}$.
For a graph $G$, we define
{\it altermatic number} $\zeta(G)$ and {\it strong altermatic number} $\zeta_s(G)$,
respectively, as follows
$$
\begin{array}{ccl}
\zeta(G) & = &  \displaystyle \max_{{\cal H}}\{|V({\cal H})|-alt({\cal H}): {\rm KG}({\cal H})\longleftrightarrow G\}, \\ \\
\zeta_s(G) & = & \displaystyle \max_{{\cal H}}\{|V({\cal H})|+1-salt({\cal H}): {\rm KG}({\cal H})\longleftrightarrow G\}.
\end{array}
$$

Note that $\zeta(G)=\zeta(G,1)$   and $\zeta_s(G)\leq  \zeta(G,2)$; consequently, in view of Theorem~\ref{generalsalt}, we have
the following lower bound for the chromatic number of graphs.

\begin{alphthm}\label{salt}{\rm \cite{2013arXiv1302.5394A}}
For any graph $G$, we have
$$\chi(G) \geq \max\{ \zeta(G), \zeta_s(G)\}.$$
\end{alphthm}
We can usually find an appropriate upper bound for $alt_{\sigma}(F,2)$ by computing $salt_{\sigma}(F)$.


In the rest of this paper, we determine the chromatic number of some families of graphs by applying Theorem~\ref{salt}, although,
Theorem~\ref{generalsalt} is stronger than Theorem~\ref{salt}.

In view of Theorem~\ref{salt}, we can consider altermatic number and strong altermatic number as tight lower bounds for chromatic number of graphs. For instance, in view of representation of the Kneser graph ${\rm KG}(n,k)$ and the Schrijver graph ${\rm SG}(n,k)$, one can see that
$\min\{\zeta({\rm KG}(n,k)), \zeta_s({\rm KG}(n,k))\}=n-2k+2$,
$\zeta({\rm SG}(n,k))\geq n-2k+1$, and $\zeta_s({\rm SG}(n,k))= n-2k+2$.
Note that a graph has several Kneser representations and different
Kneser representations can lead us to different lower bounds for chromatic number.
For instance, consider the five cycle. Set ${\cal F}=(V({\cal F}),E({\cal F}))$
and ${\cal H}=(V({\cal H}),E({\cal H}))$ as follows.
$$V({\cal F})=\{1,2,3,4,5\}\ \& \ E({\cal F})=\{\{1,2\}, \{2,3\}, \{3,4\}, \{4,5\}, \{1,5\}\},$$
$$V({\cal H})=\{1,2,3,4,5,a,b,c,d,e\}\ $$
and
$$ \ E({\cal H})=\displaystyle\left\{\{1,2\}, \{2,3\}, \{3,4\}, \{4,5\}, \{1,5\}\right\}.$$
Note that the hypergraphs ${\cal F}$ and ${\cal H}$ have the same hyperedge set and the
hypergraph ${\cal H}$ has $5$ isolated vertices. One can check that
$alt({\cal F})=3$ and this shows that the chromatic number of five cycle is at least two. Although, one can check that
$alt({\cal H})=7$ and this leads us to  $3$ as a lower bound for the chromatic number. To see this, consider the ordering
$$\sigma=(1, a, 2, b, 3, c, 4, d, 5, e).$$
For a contradiction, suppose that $alt_\sigma({\cal H})\geq 8$. This means that there exists an
$X=(x_1,x_2,\ldots,x_{10})\in \{-1,0,+1\}^{10}\setminus \{(0,0,\ldots,0)\}$ such that
$alt(X)= 8$ and that both of $X_{\sigma}^+$ and  $X_{\sigma}^-$
contain no hyperedge of ${\cal H}$. We may assume that $X$ has exactly $8$ nonzero coordinates. Otherwise,
suppose that $x_{i_1},x_{i_2},\ldots,x_{i_8}$ is an alternating sequence of nonzero terms in $X$, where
$i_1<i_2<\cdots<i_8$.
By changing the value of $x_j$ to $0$ for all $j\not \in\{i_1,i_2,\ldots,i_8\}$, one can obtain a
$Y\in \{-1,0,+1\}^{10}\setminus \{(0,0,\ldots,0)\}$
such that $alt(Y)=alt(X)=8$ and that both of $Y_{\sigma}^+$ and  $Y_{\sigma}^-$
contain no hyperedge of ${\cal H}$.
Therefore,
we may suppose that $alt(X)=8$ and $X$ has exactly $8$ nonzero coordinates.
Also, for any odd integer $1\leq i\leq 9$, at least one of $x_i$, $x_{i+1}$, or $x_{i+2}$ is zero; since otherwise,
one can conclude that either $X_{\sigma}^+$ or  $X_{\sigma}^-$ contains a hyperedge of ${\cal H}$ (indices are considered in $\mathbb{Z}_9=\{1,2,\ldots,9\}$).
Now, by a double counting, one can see that there are at least $3$ zero coordinates in $X$ which implies $alt(X)\leq 7$, a contradiction.
\section{General Kneser Graphs}
\subsection{Lower and Upper Bound}
In this section, we introduce some tight lower and upper bounds for the
chromatic number of general Kneser hypergraphs.
In fact, by presenting an upper bound for alternating
Tur\'an number (resp. strong  alternating
Tur\'an number), we obtain a lower bound for chromatic number. Next, in view of these bounds, we determine
the chromatic number of some families of graphs.
\begin{lem}\label{lowupa}
Let ${\cal H}$ be a hypergraph. For any positive integer $r\geq 2$,
 $${|V({\cal H})|-r.\alpha({\cal H})\over r-1}\leq
 \chi({\rm KG}^r({\cal H})) \leq \left\lceil{|V({\cal H})|-\alpha({\cal H})\over r-1}\right\rceil.$$
\end{lem}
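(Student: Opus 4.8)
The plan is to establish the two inequalities separately: the upper bound by an explicit colouring, and the lower bound by reducing to the colourability-defect theorem of Dol'nikov and Kříž. Throughout, $\alpha(\mathcal{H})$ denotes the independence number of $\mathcal{H}$, i.e.\ the largest size of a vertex subset of $\mathcal{H}$ containing no hyperedge, and the vertices of ${\rm KG}^r(\mathcal{H})$ are the hyperedges of $\mathcal{H}$, an $r$-set of them forming a hyperedge exactly when they are pairwise disjoint.

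For the upper bound I would first fix a maximum independent set $I\subseteq V(\mathcal{H})$, so that $|I|=\alpha(\mathcal{H})$ and $I$ contains no hyperedge, and enumerate the remaining $m:=|V(\mathcal{H})|-\alpha(\mathcal{H})$ vertices as $v_1,\ldots,v_m$. Every hyperedge $e\in E(\mathcal{H})$ must meet $V(\mathcal{H})\setminus I$ (otherwise $e\subseteq I$ would contradict independence of $I$), so $c(e):=\min\{j:\ v_j\in e\}$ is well defined. I then colour the vertex $e$ of ${\rm KG}^r(\mathcal{H})$ by $\lceil c(e)/(r-1)\rceil$, which uses at most $\lceil m/(r-1)\rceil$ colours, i.e.\ at most $\bigl\lceil (|V(\mathcal{H})|-\alpha(\mathcal{H}))/(r-1)\bigr\rceil$. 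To see this colouring is proper, suppose $r$ pairwise disjoint hyperedges $e_1,\ldots,e_r$ all received one colour; then $c(e_1),\ldots,c(e_r)$ lie in a common block of at most $r-1$ consecutive indices, so by pigeonhole two of them coincide, say $c(e_p)=c(e_q)=j$, forcing $v_j\in e_p\cap e_q$ and contradicting disjointness. This part is entirely elementary.

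For the lower bound I would pass through the $r$-colourability defect $cd_r(\mathcal{H})$, the minimum number of vertices whose deletion leaves an induced subhypergraph that partitions into $r$ independent sets. The key elementary observation is that any $r$-colourable induced subhypergraph on a vertex set $S$ decomposes as $S=S_1\cup\cdots\cup S_r$ with each $S_i$ containing no hyperedge of $\mathcal{H}$, so $|S_i|\le\alpha(\mathcal{H})$ and hence $|S|\le r\,\alpha(\mathcal{H})$; this yields $cd_r(\mathcal{H})\ge |V(\mathcal{H})|-r\,\alpha(\mathcal{H})$. Invoking the Dol'nikov–Kříž inequality $\chi({\rm KG}^r(\mathcal{H}))\ge\lceil cd_r(\mathcal{H})/(r-1)\rceil$ and combining with the previous estimate gives
$$\chi({\rm KG}^r(\mathcal{H}))\ \ge\ \left\lceil\frac{cd_r(\mathcal{H})}{r-1}\right\rceil\ \ge\ \frac{|V(\mathcal{H})|-r\,\alpha(\mathcal{H})}{r-1},$$
which is the claimed lower bound.

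The genuinely hard input is the Dol'nikov–Kříž bound itself, which is topological (it generalises the Lovász–Kneser theorem); everything else is bookkeeping. As a sanity check on tightness I would test both bounds on the representation $\mathcal{H}_1=\bigl([n],\binom{[n]}{k}\bigr)$ with $r=2$, where $\alpha(\mathcal{H}_1)=k-1$ and the lower bound collapses exactly to the Lovász value $n-2k+2=\chi({\rm KG}(n,k))$, confirming that the factor $r\,\alpha$ in the numerator and the denominator $r-1$ are the right normalisations.
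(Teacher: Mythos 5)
Your proof is correct, and the upper bound is essentially identical to the paper's: the paper partitions $V({\cal H})\setminus S$ into blocks $S_1,\ldots,S_q$ of size at most $r-1$ and colours each hyperedge by the first block it meets, which is exactly your colour $\lceil c(e)/(r-1)\rceil$, with the same pigeonhole verification. The lower bound, however, takes a genuinely different route. The paper constructs an explicit hypergraph homomorphism $\phi\colon {\rm KG}^r(|V({\cal H})|,\alpha({\cal H})+1)\to {\rm KG}^r({\cal H})$ by sending each $(\alpha({\cal H})+1)$-set of vertices to a hyperedge it contains (such a hyperedge exists by maximality of $\alpha$), and then invokes the Alon--Frankl--L\'ov\'asz formula $\chi({\rm KG}^r(n,k))=\lceil (n-r(k-1))/(r-1)\rceil$; you instead bound the $r$-colourability defect by $cd_r({\cal H})\ge |V({\cal H})|-r\,\alpha({\cal H})$ and invoke Dol'nikov--K\v{r}\'{\i}\v{z}. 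Both arguments outsource the topology to a known black box, and the two black boxes are close relatives (Alon--Frankl--L\'ov\'asz is the complete-uniform-hypergraph case of the defect bound, while the paper's homomorphism trick is essentially a combinatorial reduction to that case). Your version is marginally more self-contained in that it cites one general theorem rather than a special case plus a reduction; the paper's version has the mild advantage of staying within the homomorphism framework it uses elsewhere. Your estimate $cd_r({\cal H})\ge|V({\cal H})|-r\,\alpha({\cal H})$ is justified correctly, including the point that independence in the induced subhypergraph on the surviving vertices coincides with containing no hyperedge of ${\cal H}$ at all.
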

\begin{proof}{
First, we present a hypergraph homomorphism
$$\phi: {\rm KG}^r(|V({\cal H})|, \alpha({\cal H})+1)\longrightarrow {\rm KG}^r({\cal H}),$$
to show that the lower bound holds.
Let $V({\cal H})=\{v_1,\ldots,v_{n}\}$, where $n=|V({\cal H})|$. For any vertex
$A=\{i_1,\ldots,i_l\}$ of ${\rm KG}^r(|V({\cal H})|, \alpha({\cal H})+1)$, where $l=\alpha({\cal H})+1$, set
$\phi(A)$ to be an arbitrary hyperedge of ${\cal H}$ which is a subset of
$\{v_{i_1},\ldots,v_{i_l}\}$.
It is easy to see that $\phi$ is a hypergraph homomorphism. Consequently, 
$$\chi({\rm KG}^r(|V({\cal H})|, \alpha({\cal H})+1))=\left\lceil{|V({\cal H})|-r.\alpha({\cal H})\over r-1}\right\rceil
\leq \chi({\rm KG}^r({\cal H})).$$ 
Now we prove the upper bound. Let $S\subseteq V({\cal H})$ be an independent set of size $\alpha({\cal H})$.
Also,
let $|V({\cal H})\setminus S|=t=|V({\cal H})|-\alpha({\cal H})$. Consider a partition
$S_1\cup\cdots \cup S_q$ of $V({\cal H})\setminus S$, where
$S_i$'s are pairwise disjoint, $|S_1|=\cdots=|S_{q-1}|=r-1$, and $0<|S_q| \leq r-1$ ($q=\lceil {t\over r-1 }\rceil$).
For any hyperedge of ${\cal H}$, assign the color
$i$ to it, where  $i$ is the smallest positive integer such that this hyperedge
has nonempty intersection with $S_i$. One can check that this assignment provides a
proper coloring for the Kneser hypergraph ${\rm KG}^r({\cal H})$.
}\end{proof}
For a hypergraph ${\cal H}$, a vertex cover of ${\cal H}$ is a subset of $V({\cal H})$
which meets each hyperedge of ${\cal H}$. The minimum size of such a subset is called the vertex covering number of ${\cal H}$ and is denoted by $\beta({\cal H})$. It is well-known that $\beta({\cal H})=|V({\cal H})|-\alpha({\cal H})$.
In the next theorem, we introduce a new formula to determine the chromatic number of graphs in terms of the covering number of some related hypergraphs.
\begin{thm}
For any graph $G$, we have
$$\chi(G)=\min\{\beta({\cal H}):\ {\rm KG}({\cal H})\cong G\}.$$
\end{thm}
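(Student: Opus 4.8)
The plan is to prove the two inequalities separately.

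For the inequality $\chi(G)\le\min\{\beta({\cal H}):{\rm KG}({\cal H})\cong G\}$ it suffices to show $\chi(G)\le\beta({\cal H})$ for \emph{every} hypergraph ${\cal H}$ with ${\rm KG}({\cal H})\cong G$. This is the content of the upper bound in Lemma~\ref{lowupa} specialized to $r=2$, since $\lceil(|V({\cal H})|-\alpha({\cal H}))/(r-1)\rceil=|V({\cal H})|-\alpha({\cal H})=\beta({\cal H})$ when $r=2$. Concretely, I would fix a minimum vertex cover $C\subseteq V({\cal H})$ and color each hyperedge $e$ (a vertex of $G$) by an arbitrary element of $C\cap e$, which is nonempty because $C$ is a cover. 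If two hyperedges receive the same color, they share a vertex of $C$, hence are not disjoint, hence non-adjacent in $G$; so this is a proper coloring of $G$ with $|C|=\beta({\cal H})$ colors. Taking the minimum over all representations gives $\chi(G)\le\min\beta({\cal H})$.

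The real content is the reverse inequality, namely producing a single Kneser representation ${\cal H}$ with ${\rm KG}({\cal H})\cong G$ and $\beta({\cal H})=\chi(G)$. The construction I would use is as follows. Fix a proper coloring $c:V(G)\to[t]$ with $t=\chi(G)$ and color classes $C_1,\dots,C_t$, and let $\overline{E}$ denote the set of non-adjacent unordered pairs of distinct vertices of $G$. Take the ground set to be the disjoint union $V({\cal H})=[t]\sqcup\overline{E}\sqcup V(G)$, and to each $v\in V(G)$ assign the hyperedge $S_v=\{c(v)\}\cup\{\,\{u,v\}:u\in V(G),\ \{u,v\}\in\overline{E}\,\}\cup\{v\}$, that is, its color, all non-edges through it, and a private copy of $v$. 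The family $\{S_v\}_{v\in V(G)}$ is the hyperedge set of ${\cal H}$.

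I would then verify the disjoint property. For distinct $u,v$: the color element satisfies $c(u)=c(v)$ only when $u,v$ lie in a common class (hence are non-adjacent); the pair-element $\{u,v\}$ lies in both $S_u$ and $S_v$ exactly when $\{u,v\}\in\overline{E}$; and the private elements never coincide. Hence $S_u\cap S_v\neq\emptyset$ if and only if $uv\notin E(G)$, which is precisely the required disjoint property, and the private elements guarantee that distinct vertices get distinct hyperedges, so ${\rm KG}({\cal H})\cong G$. Finally, every $S_v$ contains $c(v)\in[t]$, so $[t]$ is a vertex cover of ${\cal H}$ and $\beta({\cal H})\le t=\chi(G)$; combined with the first direction, which gives $\beta({\cal H})\ge\chi(G)$, this forces $\beta({\cal H})=\chi(G)$ and completes the proof. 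The step I expect to be the main obstacle is exactly this second direction: one must choose the representation so that the minimum cover does not overshoot $\chi(G)$ while still faithfully encoding every non-edge of $G$. The device resolving this is to let the $t$ color classes serve as the cheap cover, so that the non-edge and private ground elements only refine intersections and ensure distinctness without enlarging the minimum cover; that they cannot be exploited to build a cover of size below $t$ is immediate from the already-proved bound $\beta({\cal H})\ge\chi(G)$.
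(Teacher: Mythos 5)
Your proposal is correct and follows essentially the same route as the paper: the upper bound is the covering coloring from Lemma~\ref{lowupa} with $r=2$, and the matching representation is obtained by adjoining to each hyperedge a ground element recording its color under an optimal proper coloring, so that the $\chi(G)$ color elements form a cover while the remaining ground set stays independent. The only cosmetic difference is that the paper augments an arbitrary pre-existing Kneser representation $\bar{\cal H}$ of $G$, whereas you build the base representation explicitly from the non-edges plus private elements; both are valid.
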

\begin{proof}{
Note that by Lemma~\ref{lowupa}, it is enough to show that
$$\chi(G)\geq\min\{\beta({\cal H}):\ {\rm KG}({\cal H})\cong G\}.$$
To this end,
consider a Kneser representation ${\rm KG}(\bar{\cal H})$ of $G$. Also,
let $c:{\rm KG}(\bar{\cal H})\cong G \longrightarrow C $
be a proper coloring of ${\rm KG}(\bar{\cal H})$ with $\chi(G)$ colors such that
$C\cap V(\bar{\cal H})=\varnothing$.
Consider the hypergraph ${\cal H}$, where $V({\cal H})=V(\bar{\cal H})\cup C$
and $E({\cal H})=\{e\cup c(e):\ e\in E(\bar{\cal H})\}.$
One can check that ${\rm KG}({\cal H})\cong G$; and consequently,
$\chi({\rm KG}({\cal H}))=\chi(G)$.
Moreover, $V(\bar{\cal H})\subseteq V({\cal H})$ is an independent set of ${\cal H}$.
Therefore, $\alpha({\cal H})\geq |V(\bar{\cal H})|=|V({\cal H})|-\chi({\rm KG}({\cal H}))$.
In view of the upper bound of the previous lemma, we have
$\chi(G)=\chi({\rm KG}({\cal H}))=|V({\cal H})|-\alpha({\cal H})$ which completes the proof.
}\end{proof}

Now, we introduce some bounds for the chromatic number of general Kneser hypergraphs in terms of the generalized Tur\'an number. The relationship between
the chromatic number of some families of general Kneser graphs and the generalized Tur\'an number has been
studied by several researchers with different notations. Frankl~\cite{MR797510} determined the chromatic
number of generalized Kneser graph ${\rm KG}(n,k,1)$ provided that $n$ is sufficiently large.
Note that ${\rm KG}(n,k,1)$ is isomorphic to ${\rm KG}(K_n,K_k)$. Also,
the chromatic number of ${\rm KG}(K_n,C_m)$ was investigated in \cite{MR3073134} and the
authors obtained independently the same result of Tort~\cite{MR689810} when $m=3$ and $n\geq 10$.
In fact, Tort has shown that
$\chi({\rm KG}(K_n,C_3))={n \choose 2}-{\rm ex}(K_n,C_3)=\lfloor {(n-1)^2 \over 4}\rfloor$ for $n\geq 5$.

\begin{lem}\label{lowup}
Let ${\cal H}$ be a hypergraph and ${\cal F}$ be a family of hypergraphs. For any positive integer $r\geq 2$,
 $${|E({\cal H})|-r.{\rm ex}({\cal H}, {\cal F})\over r-1}\leq
 \chi({\rm KG}^r({\cal H},{\cal F})) \leq \left\lceil{|E({\cal H})|-{\rm ex}({\cal H}, {\cal F})\over r-1}\right\rceil.$$
\end{lem}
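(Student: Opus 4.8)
The plan is to deduce the statement directly from Lemma~\ref{lowupa}, applied to the auxiliary hypergraph ${{\cal H}\choose {\cal F}}$. Recall that, by definition, ${\rm KG}^r({\cal H}, {\cal F})$ is precisely ${\rm KG}^r\left({{\cal H}\choose {\cal F}}\right)$, where ${{\cal H}\choose {\cal F}}$ is the hypergraph whose vertex set is $E({\cal H})$ and whose hyperedges are the hyperedge sets of the subhypergraphs of ${\cal H}$ isomorphic to a member of ${\cal F}$. In particular $\left|V\left({{\cal H}\choose {\cal F}}\right)\right| = |E({\cal H})|$, so Lemma~\ref{lowupa} applied to ${{\cal H}\choose {\cal F}}$ already yields bounds of exactly the required shape, with $|E({\cal H})|$ playing the role of $|V(\cdot)|$ and the independence number $\alpha\left({{\cal H}\choose {\cal F}}\right)$ playing the role of ${\rm ex}({\cal H}, {\cal F})$.

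The crux is therefore to identify these two quantities, that is, to establish
$$\alpha\left({{\cal H}\choose {\cal F}}\right) = {\rm ex}({\cal H}, {\cal F}).$$
This is a matter of unwinding the definitions. A subset $S \subseteq E({\cal H})$ is independent in ${{\cal H}\choose {\cal F}}$ exactly when it contains no hyperedge of ${{\cal H}\choose {\cal F}}$, i.e., when no copy of a member of ${\cal F}$ inside ${\cal H}$ has all of its hyperedges lying in $S$. Equivalently, the spanning subhypergraph of ${\cal H}$ with hyperedge set $S$ is ${\cal F}$-free. Thus the independent sets of ${{\cal H}\choose {\cal F}}$ are in bijection with the hyperedge sets of ${\cal F}$-free spanning subhypergraphs of ${\cal H}$; taking maxima on both sides gives the displayed equality by the very definition of ${\rm ex}({\cal H}, {\cal F})$.

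Substituting $\alpha\left({{\cal H}\choose {\cal F}}\right) = {\rm ex}({\cal H}, {\cal F})$ and $\left|V\left({{\cal H}\choose {\cal F}}\right)\right| = |E({\cal H})|$ into the two inequalities supplied by Lemma~\ref{lowupa} completes the argument. I do not anticipate a genuine obstacle: the entire content is the observation that an ${\cal F}$-free spanning subhypergraph of ${\cal H}$ is the same object as an independent set of ${{\cal H}\choose {\cal F}}$, after which the bounds are inherited verbatim from the already-established lemma. The only point deserving care is to confirm that ``containing no hyperedge of ${{\cal H}\choose {\cal F}}$'' matches ``being ${\cal F}$-free'' under the convention recorded earlier in the excerpt that members of ${\cal F}$ carry no isolated vertices, so that each copy is detected purely through its hyperedge set.
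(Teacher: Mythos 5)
Your proposal is correct and is essentially the paper's own proof: the paper likewise applies Lemma~\ref{lowupa} to ${{\cal H}\choose {\cal F}}$ after noting that $\alpha\left({{\cal H}\choose {\cal F}}\right)={\rm ex}({\cal H},{\cal F})$, an identity the paper states with ``one can check'' and you simply spell out in more detail.
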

\begin{proof}{
One can check that $\alpha({{\cal H} \choose {\cal F}})={\rm ex}({\cal H}, {\cal F})$. Now, in view of Lemma~\ref{lowupa},
the assertion holds.
}\end{proof}
\begin{lem}\label{ex}
For any hypergraph ${\cal H}$ and a family ${\cal F}$ of hypergraphs,
$$\displaystyle\begin{array}{rllll}
|E({\cal H})|-{\rm ex}_{alt}({\cal H}, {\cal F}) &    \leq &  \chi({\rm KG}({\cal H},{\cal F})) & \leq  & |E({\cal H})|-{\rm ex}(H, {\cal F}),\\
& & & &  \\
|E({\cal H})|+1-{\rm ex}_{salt}({\cal H}, {\cal F}) &    \leq &  \chi({\rm KG}(H,{\cal F})) & \leq  & |E({\cal H})|-{\rm ex}({\cal H}, {\cal F}).
\end{array}
$$
\end{lem}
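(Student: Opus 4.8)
The plan is to establish the four inequalities by separately analyzing the upper bounds (which should follow directly from earlier material) and the lower bounds (which require connecting the alternating Tur\'an number to the altermatic number via Theorem~\ref{salt}). First I would dispose of the two upper bounds: both read $\chi({\rm KG}({\cal H},{\cal F})) \leq |E({\cal H})|-{\rm ex}({\cal H},{\cal F})$, which is exactly the $r=2$ case of the upper bound in Lemma~\ref{lowup}. Indeed, for $r=2$ the ceiling $\lceil (|E({\cal H})|-{\rm ex}({\cal H},{\cal F}))/(r-1)\rceil$ collapses to $|E({\cal H})|-{\rm ex}({\cal H},{\cal F})$, so the upper bounds are immediate and require no further work.

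For the lower bounds, the key idea is to recognize that ${\rm ex}_{alt}$ and ${\rm ex}_{salt}$ are precisely the alternating-coloring reformulations of $alt$ and $salt$ applied to the hypergraph $\binom{{\cal H}}{{\cal F}}$, whose general Kneser graph is ${\rm KG}({\cal H},{\cal F})$. Recall from Lemma~\ref{lowup}'s proof that $\alpha\bigl(\binom{{\cal H}}{{\cal F}}\bigr)={\rm ex}({\cal H},{\cal F})$, and that the hyperedges of $\binom{{\cal H}}{{\cal F}}$ are exactly the copies of members of ${\cal F}$ inside ${\cal H}$. The plan is to exhibit, for each linear ordering $\sigma$ of $E({\cal H})$ (equivalently, of $V(\binom{{\cal H}}{{\cal F}})$), a bijection between alternating $2$-colorings of $E({\cal H})$ of length $\ell$ with respect to $\sigma$ and signed vectors $X$ with $alt(X_\sigma)=\ell$: the red hyperedges play the role of $X^-_\sigma$ and the blue hyperedges the role of $X^+_\sigma$. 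Under this correspondence, the red subhypergraph ${\cal H}^R$ being ${\cal F}$-free translates exactly into $X^-_\sigma$ containing no hyperedge of $\binom{{\cal H}}{{\cal F}}$, and likewise for blue and $X^+_\sigma$. Hence
$$
{\rm ex}_{alt}({\cal H},{\cal F},\sigma) = alt_\sigma\Bigl(\tbinom{{\cal H}}{{\cal F}}\Bigr),
\qquad
{\rm ex}_{salt}({\cal H},{\cal F},\sigma) = salt_\sigma\Bigl(\tbinom{{\cal H}}{{\cal F}}\Bigr),
$$
and taking the minimum over $\sigma$ gives ${\rm ex}_{alt}({\cal H},{\cal F})=alt(\binom{{\cal H}}{{\cal F}})$ and ${\rm ex}_{salt}({\cal H},{\cal F})=salt(\binom{{\cal H}}{{\cal F}})$.

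With these identities in hand, I would substitute into the definitions of the altermatic numbers. Using the Kneser representation ${\rm KG}(\binom{{\cal H}}{{\cal F}})={\rm KG}({\cal H},{\cal F})$, the definition of $\zeta$ gives $\zeta({\rm KG}({\cal H},{\cal F})) \geq |E({\cal H})| - alt(\binom{{\cal H}}{{\cal F}}) = |E({\cal H})| - {\rm ex}_{alt}({\cal H},{\cal F})$, since $|V(\binom{{\cal H}}{{\cal F}})|=|E({\cal H})|$. Similarly $\zeta_s({\rm KG}({\cal H},{\cal F})) \geq |E({\cal H})|+1-salt(\binom{{\cal H}}{{\cal F}})=|E({\cal H})|+1-{\rm ex}_{salt}({\cal H},{\cal F})$. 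Applying Theorem~\ref{salt}, which states $\chi(G)\geq\max\{\zeta(G),\zeta_s(G)\}$, then yields both lower bounds at once.

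The main obstacle is the careful verification of the translation in the second paragraph, specifically that the ``alternating $2$-coloring of length $\ell$'' condition on the ordering $\sigma$ matches the $alt(X_\sigma)=\ell$ condition on signed vectors. One must check that assigning colors to exactly $\ell$ hyperedges with consecutive colored hyperedges differing in color corresponds exactly to a $\{-1,0,+1\}$-vector whose nonzero entries form an alternating subsequence of length $\ell$ in the order $\sigma$; the uncolored hyperedges correspond to the zero entries. I expect this to be a bookkeeping matter rather than a conceptual difficulty, but it must be done cleanly because the $+1$ step in the strong version (the $|E({\cal H})|+1$ versus $|E({\cal H})|$) rests on the exact ``at most one'' versus ``none'' distinction between $salt$ and $alt$, mirrored in the ``at least one'' versus ``both'' distinction between ${\rm ex}_{salt}$ and ${\rm ex}_{alt}$.
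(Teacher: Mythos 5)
Your proposal is correct and follows exactly the paper's own route: the paper's proof consists of observing that $alt\bigl({{\cal H} \choose {\cal F}}\bigr)={\rm ex}_{alt}({\cal H}, {\cal F})$ and $salt\bigl({{\cal H} \choose {\cal F}}\bigr)={\rm ex}_{salt}({\cal H}, {\cal F})$ and then invoking Theorem~\ref{salt} together with Lemma~\ref{lowup}. Your write-up merely spells out the red/blue versus $X^-_\sigma$/$X^+_\sigma$ correspondence that the paper leaves as ``one can check.''
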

\begin{proof}{
One can check that $alt({{\cal H} \choose {\cal F}})={\rm ex}_{alt}({\cal H}, {\cal F})$ and
$salt({{\cal H} \choose {\cal F}})={\rm ex}_{salt}({\cal H}, {\cal F})$. Now, in view of Theorem~\ref{salt} and Lemma~\ref{lowup},
the assertion holds.
}\end{proof}
In view of Lemma~\ref{ex},  if
${\rm ex}(H, {\cal F})={\rm ex}_{alt}(H, {\cal F})$ or ${\rm ex}(H, {\cal F})+1={\rm ex}_{salt}(H, {\cal F})$, then
$$\chi({\rm KG}(H,{\cal F}))=|E(H)|-{\rm ex}(H, {\cal F}).$$
Here, we present several examples to show that the upper bound mentioned in Lemma~\ref{ex} is sharp.
We showed that ${\rm ex}(K_4, P_2)={\rm ex}_{alt}(K_4, P_2)=2$. Consequently,
$\chi({\rm KG}(K_4, P_2))=|E(K_4)|-{\rm ex}(K_4, P_2)=4$.
Furthermore, it is known that if $n$ is sufficiently large, then the
Tur\'an number of $K_k$, i.e. ${\rm ex}(K_n,K_k)$,  is ${n\choose 2}-(k-1){s \choose 2}-rs$, where $n=(k-1)s+r, 0\leq r< k-1$. Hence, in view of
the result of Frankl~\cite{MR797510}, one can see that there exists an
integer $n_k$ such that for $n\geq n_k$, we have $\chi({\rm KG}(K_n,K_k))={n\choose 2}-{\rm ex}(K_n,K_k)$.
Also, the following example confirms that the lower bound mentioned in Lemma~\ref{lowup} is sharp.
One can see that if $G=C_n$ and ${\cal F}$ consists of all subgraphs of $C_n$ with exactly $k$-edges, then
$\chi({\rm KG}(G, {\cal F}))=\chi(${\rm SG}(n,k)$)=n-2k+2=|E(G)|-2.{\rm ex}(G,{\cal F})$.
\subsection{Kneser Multigraph}
In what follows, by Lemma~\ref{lowup}, we determine the chromatic number of some family of graphs.
In particular, we determine $\chi({\rm KG}(G,{\cal F}))$ in terms of the generalized Tur\'an number whenever $G$ is a multigraph and ${\cal F}$ is a family of simple graphs.

\begin{thm}\label{multi}
Let $G$ be a multigraph such that the multiplicity of each edge of $G$ is at least two.
If ${\cal F}$ is a family of simple subgraphs of $G$, then we have
$$\chi({\rm KG}(G,{\cal F}))= \zeta({\rm KG}(G,{\cal F}))=|E(G)|-{\rm ex}(G, {\cal F}).$$
In particular, if the multiplicity of each edge is an even integer, then
$$\chi({\rm KG}(G,{\cal F}))= \zeta_s({\rm KG}(G,{\cal F})).$$
\end{thm}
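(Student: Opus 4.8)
The plan is to reduce all three displayed equalities to the single Tur\'an identity ${\rm ex}_{alt}(G,{\cal F})={\rm ex}(G,{\cal F})$ (together with its strong analogue ${\rm ex}_{salt}(G,{\cal F})={\rm ex}(G,{\cal F})+1$ for the even case), after which everything follows formally from the machinery already in place. The starting observation is that, since the members of ${\cal F}$ are simple, a sub-multigraph $H\subseteq G$ contains a copy of some $F\in{\cal F}$ if and only if its underlying simple graph does. Writing $\bar G$ for the underlying simple graph of $G$ and $\mu_f$ for the multiplicity of an underlying edge $f$, an ${\cal F}$-free spanning subgraph of maximum size is obtained by choosing an ${\cal F}$-free set $S\subseteq E(\bar G)$ and taking all parallel copies of each edge of $S$, so that
$${\rm ex}(G,{\cal F})=\max\Big\{\, \sum_{f\in S}\mu_f \ :\ S\subseteq E(\bar G)\ {\cal F}\text{-free}\,\Big\}.$$
I will use this weighted description throughout.

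Granting the Tur\'an identities, the conclusions are immediate. By Lemma~\ref{ex} the lower bound $|E(G)|-{\rm ex}_{alt}(G,{\cal F})$ and the upper bound $|E(G)|-{\rm ex}(G,{\cal F})$ on $\chi({\rm KG}(G,{\cal F}))$ then coincide, giving the middle equality $\chi=|E(G)|-{\rm ex}(G,{\cal F})$. For the altermatic number, the hypergraph ${\cal H}={G\choose{\cal F}}$ is itself a Kneser representation of ${\rm KG}(G,{\cal F})$ with $|V({\cal H})|=|E(G)|$ and $alt({\cal H})={\rm ex}_{alt}(G,{\cal F})$ (the identity recorded in the proof of Lemma~\ref{ex}); feeding this representation into the definition of $\zeta$ yields $\zeta({\rm KG}(G,{\cal F}))\ge |E(G)|-{\rm ex}_{alt}(G,{\cal F})=|E(G)|-{\rm ex}(G,{\cal F})$, and since Theorem~\ref{salt} supplies $\zeta\le\chi$, the first equality follows. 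The strong version is identical, using $salt({\cal H})={\rm ex}_{salt}(G,{\cal F})$ and $\zeta_s({\rm KG}(G,{\cal F}))\ge |E(G)|+1-{\rm ex}_{salt}(G,{\cal F})$, which equals $|E(G)|-{\rm ex}(G,{\cal F})$ exactly when ${\rm ex}_{salt}=\,{\rm ex}+1$.

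The content therefore lies in the two Tur\'an identities. Since ${\rm ex}\le{\rm ex}_{alt}$ and ${\rm ex}+1\le{\rm ex}_{salt}$ hold in general, I only need one ordering $\sigma$ of $E(G)$ realizing the reverse inequalities, and I will take the \emph{grouped} ordering that places all parallel copies of each underlying edge consecutively. The key structural point is that, because each parallel class is a contiguous block of $\sigma$, the colored edges of a class form a contiguous run of the alternating sequence; within such a run the colors strictly alternate, so in every class the numbers of red and blue colored copies differ by at most one. Hence a monochromatic class carries at most one colored edge, and a bichromatic (mixed) class carries at most $\mu_f$. Letting $R$ (resp.\ $B$) be the set of underlying edges receiving at least one red (resp.\ blue) copy, both sets are ${\cal F}$-free, so $\sum_{f\in R}\mu_f$ and $\sum_{f\in B}\mu_f$ are each at most ${\rm ex}(G,{\cal F})$. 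For ${\rm ex}_{alt}$, splitting classes into red-only, blue-only and mixed and using $\mu_f\ge 2$ on the monochromatic classes, the length $L$ of the coloring satisfies $2L\le\sum_{f\in R}\mu_f+\sum_{f\in B}\mu_f\le 2\,{\rm ex}(G,{\cal F})$, whence ${\rm ex}_{alt}(G,{\cal F},\sigma)\le{\rm ex}(G,{\cal F})$.

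For the strong number only one color class, say the red subgraph, need be ${\cal F}$-free, and this is where the even-multiplicity hypothesis is essential: the per-class alternation bound then improves to at most $\mu_f/2$ red copies in each class, so the total number of red edges is at most $\tfrac12\sum_{f\in R}\mu_f\le\tfrac12{\rm ex}(G,{\cal F})$; combined with the global alternation bound (the blue count exceeds the red count by at most one), this gives $L\le 2\cdot\tfrac12{\rm ex}(G,{\cal F})+1={\rm ex}(G,{\cal F})+1$. The main obstacle I anticipate is precisely this strong case: bounding $L$ when only one color is constrained, for which the even-multiplicity hypothesis must be exploited in exactly the right place — an odd class could contribute an extra red copy and destroy the $\mu_f/2$ estimate. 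I would take particular care with the boundary counting in the per-class alternation claim, since that is the one step where the argument could silently lose or gain a $+1$.
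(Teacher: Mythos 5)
Your proposal is correct and follows essentially the same route as the paper: reduce everything to the identities ${\rm ex}_{alt}(G,{\cal F})={\rm ex}(G,{\cal F})$ and ${\rm ex}_{salt}(G,{\cal F})={\rm ex}(G,{\cal F})+1$ via Lemma~\ref{ex}, take the ordering that makes each parallel class a consecutive block, and exploit the forced color alternation within each block (so a monochromatic class carries one colored edge, and at most half of an even class can be red). Your symmetric count $2L\le\sum_{f\in R}\mu_f+\sum_{f\in B}\mu_f\le 2\,{\rm ex}(G,{\cal F})$ is just a streamlined rephrasing of the paper's case analysis on the classes $C_1,C_2,C_3$ with the assumption $k_2\ge k_3$.
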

\begin{proof}{
First, we show that $\chi({\rm KG}(G,{\cal F}))= \zeta({\rm KG}(G,{\cal F}))$.
In view of Lemma~\ref{ex}, it is sufficient to show that ${\rm ex}_{alt}(G, {\cal F})={\rm ex}(G, {\cal F})$.
Assume that $E(G)=I_1\cup\cdots\cup I_t$ is a partition of $E(G)$, where
for any $1\leq i\leq t$, there are two distinct vertices $u$ and $v$ such that $I_i$ consists of all edges
incident with both of $u$ and $v$. Since the multiplicity of each edge is at least two, we have
for any $1\leq i\leq t$, $|I_i|\geq 2$. Consider an ordering
$\sigma$ for the edge set of $G$ such that all edges of each $I_j$ appear consecutively in
the ordering $\sigma$ (they form an interval in this ordering). Now,
we show that ${\rm ex}_{alt}(G, {\cal F},\sigma)={\rm ex}(G, {\cal F})$.
To see this, consider an alternating coloring of a subset of edges of
$G$ of length more than ${\rm ex}(G, {\cal F})$ with respect to the ordering $\sigma$. In view of the ordering $\sigma$, for any $1\leq j\leq t$ or
both colors are assigned to some edges of $I_j$, or exactly one edge of $I_j$ is colored, or no color
is assigned to edges of $I_j$.
Suppose that $C_1=\{I_{j_1},I_{j_2},\ldots,I_{j_{k_1}}\}$ consists of all
$I_{j_i}$'s such that both colors are assigned to some edges of $I_{j_i}$,
$C_2=\{I_{l_1},I_{l_2},\ldots,I_{l_{k_2}}\}$ consists of all
$I_{l_i}$'s such that just red color is assigned to some edges of $I_{l_i}$,
and $C_3=\{I_{t_1},I_{t_2},\ldots,I_{t_{k_3}}\}$ consists of all
$I_{t_i}$'s such that just blue color is assigned to some edges of $I_{t_j}$. Without loss of generality,
suppose that $|C_2|=k_2\geq k_3=|C_3|$. Denote the spanning subgraph
containing all  the red edges (resp.  the blue edges) by $G^R$ (resp. $G^B$).
We show that the subgraph $G^R$ contains some member of ${\cal F}$.
On the contrary, suppose that the assertion is false.
Therefore, the subgraph $H$ consisting of all edges in
$(\displaystyle\cup_{i=1}^{k_1}I_{j_i})\cup(\displaystyle\cup_{i=1}^{k_2} I_{l_i})$ contains no member of ${\cal F}$
and so $|E(H)|\leq {\rm ex}(G, {\cal F})$.
This implies that
$$\begin{array}{lll}
  |E(G^R)|+|E(G^B)| & \leq & \displaystyle\sum_{i=1}^{k_1}|I_{j_i}|+k_2+k_3\\
                    & \leq & \displaystyle\sum_{i=1}^{k_1}|I_{j_i}|+2k_2 \\
                    & \leq & \displaystyle\sum_{i=1}^{k_1}|I_{j_i}|+\displaystyle \sum_{i=1}^{k_2}|I_{l_i}| \\
                    &   =  &  |E(H)|\leq {\rm ex}(G, {\cal F})
\end{array}$$
which contradicts our assumption that the number of colored edges, i.e., $|E(G^R)|+|E(G^B)|$, is more than
${\rm ex}(G, {\cal F})$.

To prove the second part of theorem,
in view of Lemma~\ref{ex}, it is sufficient to show that ${\rm ex}_{salt}(G, {\cal F})={\rm ex}(G, {\cal F})+1$.
To see this, consider an alternating coloring of a subset of edges of
$G$ of length more than ${\rm ex}(G, {\cal F})+1$ with respect to the aforementioned ordering $\sigma$.
Consider $C_1$, $C_2$, and $C_3$ as defined in the previous part.
We show that each of the red subgraph $G^R$ and the blue subgraph $G^B$ contains some member of ${\cal F}$.
Note that ${\rm ex}(G, {\cal F})$ is an even integer.
Also, both of $G^R$ and $G^B$ have at least $1+{{\rm ex}(G, {\cal F})\over 2}$ edges.
On the contrary, suppose that $G^R$ contains no member of ${\cal F}$.
Therefore, the subgraph $H$ consisting of all edges in
$(\displaystyle\cup_{i=1}^{k_1}I_{j_i})\cup(\displaystyle\cup_{i=1}^{k_2}I_{l_i})$ contains no member of ${\cal F}$
and so $|E(H)|\leq {\rm ex}(G, {\cal F})$.
Since all multiplicities are even, for any $i$, red color can be assigned to at most ${|I_i|\over 2}$ edges of $I_i$.
This implies that
$$1+{{\rm ex}(G, {\cal F})\over 2}\leq|E(G^R)|\leq \displaystyle{1\over 2}\sum_{i=1}^{k_1}|I_{j_i}|+k_2
\leq{|E(H)|\over 2}\leq {{\rm ex}(G, {\cal F})\over 2},$$
which is a contradiction.
Similarly, the subgraph $G^B$ has some member of ${\cal F}$ and this implies
$\chi({\rm KG}(G,{\cal F}))= \zeta_s({\rm KG}(G,{\cal F}))$.
}\end{proof}
\begin{thm}
Let $H$ be a simple graph and ${\cal F}$ be a family of subgraphs of $H$.
Assume that $G$ is obtained from $H$ by giving the same multiplicity $r\geq 2$ to some edges of $H$. If
the subgraph of $H$ corresponding to the edges of $G$ with
multiplicity $r$ has an  ${\cal F}$-free subgraph with ${\rm ex}(H,{\cal F})$ edges, then
$$\chi({\rm KG}(G,{\cal F}))=\zeta({\rm KG}(G,{\cal F}))=|E(G)|-{\rm ex}(G,{\cal F}).$$
In particular, if $r$ is an even integer, then
$$\chi({\rm KG}(G,{\cal F}))= \zeta_s({\rm KG}(G,{\cal F})).$$
\end{thm}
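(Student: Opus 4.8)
The plan is to follow the template of Theorem~\ref{multi} and reduce the claim to two Tur\'an-type identities. By (the proof of) Lemma~\ref{ex} the representation ${G\choose{\cal F}}$ of ${\rm KG}(G,{\cal F})$ satisfies $alt({G\choose{\cal F}})={\rm ex}_{alt}(G,{\cal F})$ and $salt({G\choose{\cal F}})={\rm ex}_{salt}(G,{\cal F})$, so once I show ${\rm ex}_{alt}(G,{\cal F})={\rm ex}(G,{\cal F})$ the sandwich
$$|E(G)|-{\rm ex}(G,{\cal F})\le\zeta({\rm KG}(G,{\cal F}))\le\chi({\rm KG}(G,{\cal F}))\le|E(G)|-{\rm ex}(G,{\cal F})$$
— whose outer terms come from the definition of $\zeta$ and from Lemma~\ref{ex}, and whose middle inequality is Theorem~\ref{salt} — collapses to equalities; the strong version follows the same way from ${\rm ex}_{salt}(G,{\cal F})={\rm ex}(G,{\cal F})+1$ and the definition of $\zeta_s$. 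Writing $m={\rm ex}(H,{\cal F})$, I first compute ${\rm ex}(G,{\cal F})$: as every member of ${\cal F}$ is simple, a spanning sub-multigraph of $G$ is ${\cal F}$-free iff its underlying simple graph is, so a largest ${\cal F}$-free sub-multigraph is obtained by choosing an ${\cal F}$-free subgraph $S\subseteq H$ and taking all parallel copies of its edges. Since each multiplicity is $1$ or $r$ this is at most $rm$, while the hypothesis supplies an ${\cal F}$-free subgraph of $H$ with $m$ edges all of multiplicity $r$; hence ${\rm ex}(G,{\cal F})=rm$.

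Next I fix an ordering $\sigma$ of $E(G)$ in which each parallel class (all copies of a fixed edge of $H$) forms an interval, and bound ${\rm ex}_{alt}(G,{\cal F},\sigma)$. Given an alternating coloring with both $G^R$ and $G^B$ being ${\cal F}$-free, I classify the classes into $C_1$ (both colors occur), $C_2$ (red only), and $C_3$ (blue only); a singleton class can lie only in $C_2\cup C_3$, whereas every bichromatic class has size $r$. Because each class is an interval, a monochromatic class carries exactly one colored edge, so the coloring has length at most $r|C_1|+|C_2|+|C_3|$. The red edges realize precisely the $H$-edges of $C_1\cup C_2$ and the blue edges those of $C_1\cup C_3$, and ${\cal F}$-freeness depends only on the underlying simple graph, so ${\cal F}$-freeness of $G^R$ and $G^B$ yields $|C_1|+|C_2|\le m$ and $|C_1|+|C_3|\le m$. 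Therefore
$$r|C_1|+|C_2|+|C_3|=(r-2)|C_1|+(|C_1|+|C_2|)+(|C_1|+|C_3|)\le (r-2)m+2m=rm,$$
using $|C_1|\le m$ and $r\ge2$; thus ${\rm ex}_{alt}(G,{\cal F})\le rm={\rm ex}(G,{\cal F})$, and the reverse inequality is automatic.

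For the strong version with $r$ even I keep the same $\sigma$ and suppose some coloring of length $\ge rm+2$ has an ${\cal F}$-free color class; relabel so that $G^R$ is ${\cal F}$-free. Since consecutive colored edges alternate, $|E(G^R)|$ and $|E(G^B)|$ differ by at most $1$, whence $|E(G^R)|\ge rm/2+1$ (note $rm$ is even). On the other hand, within each bichromatic class of size $r$ the two colors alternate, so at most $r/2$ of its edges are red; adding the single red edge of each $C_2$ class and using $|C_1|+|C_2|\le m$ gives $|E(G^R)|\le\frac r2|C_1|+|C_2|\le\frac r2 m$, a contradiction. Hence ${\rm ex}_{salt}(G,{\cal F})\le rm+1$, and with the always-valid lower bound ${\rm ex}_{salt}\ge{\rm ex}+1$ this gives ${\rm ex}_{salt}(G,{\cal F})={\rm ex}(G,{\cal F})+1$; inserting this into Lemma~\ref{ex} and the definition of $\zeta_s$ finishes the ``in particular'' statement.

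The only genuine departure from Theorem~\ref{multi}, and the step I expect to be most delicate, is the presence of multiplicity-one edges, i.e.\ singleton parallel classes: the inequality $2|C_2|\le\sum_{C_2}|I_{l_i}|$ that drives the proof of Theorem~\ref{multi} fails for singletons. The remedy is to bound, for each color, the \emph{number of classes} it touches rather than the number of edges it colors, and to control those counts through ${\rm ex}(H,{\cal F})$ in the simple host $H$. The hypothesis that an ${\cal F}$-extremal subgraph of $H$ can be realized entirely among the multiplicity-$r$ edges is precisely what forces ${\rm ex}(G,{\cal F})=r\,{\rm ex}(H,{\cal F})$ and so makes the two estimates meet their targets.
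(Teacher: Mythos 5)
Your proof is correct and follows essentially the same route as the paper's: the same interval ordering of the parallel classes, the same classification into bichromatic/red-only/blue-only classes, and the same reduction via Lemma~\ref{ex} to the identities ${\rm ex}_{alt}(G,{\cal F})={\rm ex}(G,{\cal F})$ and ${\rm ex}_{salt}(G,{\cal F})={\rm ex}(G,{\cal F})+1$. The only (cosmetic) difference is in the first part, where you bound the total length symmetrically via $(r-2)|C_1|+(|C_1|+|C_2|)+(|C_1|+|C_3|)\le r\,{\rm ex}(H,{\cal F})$, whereas the paper assumes without loss of generality that the red classes are in the majority and shows that the red subgraph alone must contain a member of ${\cal F}$.
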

\begin{proof}{
First, note that ${\rm ex}(G, {\cal F})=r.{\rm ex}(H, {\cal F})$; and therefore,
in view of Lemma~\ref{ex}, it is sufficient to show that ${\rm ex}_{alt}(G, {\cal F})=r.{\rm ex}(H, {\cal F})$.
Suppose that $E(G)=I_1\cup\cdots\cup I_t\cup I_{t+1}\cup\cdots\cup I_m$ is a partition of $E(G)$, where
for any $1\leq i\leq m$, there are two distinct vertices $u$ and $v$ such that $I_i$ consists of all edges
incident with both of $u$ and $v$, and moreover,
$|I_i|=r\geq 2$ for any $1\leq i\leq t$; otherwise,  $|I_i|=1$. Consider an ordering
$\sigma$ for the edge set of $G$ such that all edges of each $I_j$ appear consecutively in
the ordering $\sigma$ (they form an interval in this ordering) and the edges of
$I_{t+1},\ldots,I_m$ are located at the end of this ordering.
Now, we show that ${\rm ex}_{alt}(G,{\cal F},\sigma)={\rm ex}(G,{\cal F})$.
To see this, consider an alternating coloring of a subset of edges of
$G$ of length more than ${\rm ex}(G, {\cal F})$ with respect to the ordering $\sigma$.
In view of the ordering $\sigma$, for any $1\leq j\leq m$,
both colors are assigned to some edges of $I_j$, or exactly one edge of $I_j$ is colored, or no color
is assigned to any edge of $I_j$.
Denote the number of $j$'s such that both colors are assigned to some edges of $I_j$
by $k_1$, the number of $j$'s such that just red color is assigned to some edges of $I_j$
by $k_2$, and the number of $j$'s such that just blue color is assigned to some edges of $I_j$
by $k_3$. Without loss of generality, suppose that $k_2\geq k_3$.
Now, we have
$k_2+k_3> {\rm ex}(G, {\cal F})-rk_1=r.{\rm ex}(H,{\cal F})-rk_1\geq 2({\rm ex}(H,{\cal F})-k_1)$.
Therefore, $k_1+k_2>{\rm ex}(H,{\cal F})$ and so the red subgraph contains
some member of ${\cal F}$, which completes the proof.

To prove the second part,
in view of Lemma~\ref{ex}, it is sufficient to show that ${\rm ex}_{salt}(G, {\cal F})=r.{\rm ex}(H, {\cal F})+1$.
To see this, consider an alternating coloring of a subset of edges of
$G$ of length more than ${\rm ex}(G, {\cal F})+1$ with respect to the aforementioned ordering $\sigma$.
Consider $k_1$, $k_2$, and $k_3$, as defined in the previous part.
Since $r$ is an even integer,
we have $${r\over 2}k_1+k_2\geq {r.{\rm ex}(H,{\cal F})+2\over 2}={r\over 2}{\rm ex}(H,{\cal F})+1.$$
The proof is completed by showing that $k_1+k_2\geq {\rm ex}(H,{\cal F})+1$ and $k_1+k_3\geq {\rm ex}(H,{\cal F})+1$.
We just prove the first inequality and the same proof works for the other inequality.
On the contrary, suppose $k_1+k_2\leq {\rm ex}(H,{\cal F})$.
Therefore,
$${r\over 2}{\rm ex}(H,{\cal F})+1\leq {r\over 2}k_1+k_2\leq {r\over 2}({\rm ex}(H,{\cal F})-k_2)+k_2=k_2(1-{r\over 2})+{r\over 2}{\rm ex}(H,{\cal F})$$
which is a contradiction.
}\end{proof}
\subsection{Path Graphs}
We know that the general Kneser graph ${\rm KG}(C_n, P_d)$ is isomorphic to
the circular complete graph $K_{n\over d}$.
Hence, this motivates us to study
the chromatic number of the {\it path graph} ${\rm KG}(G, P_d)$.
Also, in view of Lemma~\ref{lowup}, one can see that for any general Kneser graph ${\rm KG}(G,{\cal F})$, we have $|E(G)|-2  {\rm ex}(G, {\cal F})\leq
 \chi({\rm KG}(G,{\cal F})) \leq |E(G)|-{\rm ex}(G, {\cal F})$.
We introduced several families of graphs whose chromatic numbers
attain the lower or upper bound. Hence, it may be of interest to
present some general Kneser graphs whose chromatic numbers
lie strictly between the lower bound and the upper bound. Now, by
determining the chromatic number of the path graph ${\rm KG}(G, P_{2})$,
we show that this graph
has such a property provided that $G$ is a dense graph.
\begin{lem}\label{countk12}
If a graph $G$ has $n$ vertices and $e$ edges,
then it has at least ${2e\over n}(e-{n\over 2})$ subgraphs each isomorphic to
the path $P_2$.
\end{lem}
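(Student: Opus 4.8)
The plan is to count the copies of $P_2$ directly according to their central vertex, and then reduce the bound to a single application of the Cauchy--Schwarz inequality.

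First I would observe that a subgraph isomorphic to $P_2$ is a path $a-b-c$ on three vertices (recall that $P_2$ has length $2$, hence $3$ vertices), and that such a path is uniquely determined by its central vertex $b$ together with the unordered pair $\{a,c\}$ of its other two vertices, equivalently by the pair of its two edges incident with $b$. Consequently no path of length two is counted twice, and the number of copies of $P_2$ whose centre is a vertex $v$ of degree $d(v)$ is exactly ${d(v)\choose 2}$. Summing over all vertices, the total number of copies of $P_2$ in $G$ equals
$$\sum_{v\in V(G)}{d(v)\choose 2}=\frac{1}{2}\sum_{v\in V(G)}d(v)^2-\frac{1}{2}\sum_{v\in V(G)}d(v).$$

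Next I would invoke the handshake identity $\sum_{v}d(v)=2e$ to rewrite the right-hand side as $\frac{1}{2}\sum_{v}d(v)^2-e$, and then bound $\sum_{v}d(v)^2$ from below by Cauchy--Schwarz (equivalently, by convexity of $x\mapsto x^2$):
$$\sum_{v\in V(G)}d(v)^2\geq \frac{1}{n}\Big(\sum_{v\in V(G)}d(v)\Big)^2=\frac{(2e)^2}{n}=\frac{4e^2}{n}.$$
Substituting this lower bound yields
$$\sum_{v\in V(G)}{d(v)\choose 2}\geq \frac{2e^2}{n}-e=\frac{2e}{n}\Big(e-\frac{n}{2}\Big),$$
which is precisely the claimed inequality.

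There is no serious obstacle here; the whole argument is a short convexity estimate. The only point that requires a little care is the very first counting step, namely verifying that each copy of $P_2$ contributes to exactly one term ${d(v)\choose 2}$. This holds because a path of length two possesses a well-defined central vertex — its unique vertex of degree two inside the path — so the assignment of each $P_2$ to its centre is well defined and injective, guaranteeing there is neither undercounting nor overcounting.
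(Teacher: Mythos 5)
Your proof is correct and follows essentially the same route as the paper: both count the copies of $P_2$ by their central vertex to get $\sum_v {d(v)\choose 2}$ and then apply a convexity estimate, the paper invoking Jensen's inequality directly on $x\mapsto{x\choose 2}$ while you expand the binomial coefficient and use Cauchy--Schwarz on $\sum_v d(v)^2$, which is the same bound. No differences worth noting.
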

\begin{proof}{
It is straightforward to check that the number of  subgraphs of $G$ isomorphic to $P_2$ is exactly
$\sum_{i=1}^{n}{{\rm deg}(v_i)\choose 2}$.
In view of Jensen's inequality, we have
$\sum_{i=1}^{n}{{\rm deg}(v_i)\choose 2}\geq n{{\sum {\rm deg}(v_i)\over n}\choose 2}$
which completes the proof.
}\end{proof}

Let $F$ be a subgraph of a graph $G$. We say $G$ has an {\it $F$-factor} if there are vertex-disjoint subgraphs $H_1,H_2,\ldots,H_t$ of $G$ such that each $H_i$ is isomorphic to ${\cal F}$ and $\displaystyle\bigcup_{i=1}^t V(H_i)=V(G)$.
Also, an independent set $S$ of general Kneser graph
${\rm KG}(G,{\cal F})$ is called {\it intersecting} independent set,
if there is an edge $e$ of $G$ appeared in each member of $S$.
Otherwise, it is termed a {\it non-intersecting} independent set of ${\rm KG}(G,{\cal F})$.
One can easily check that a non-intersecting
independent set in $G(n,P_2)$ has at most three members.
\begin{thm}\label{pathp2}
Let $G$ be a graph with $n$ vertices.
If $G$ has a spanning subgraph whose
connected components are $H_1,\ldots,H_p, H_{p+1}$, where
for any $1\leq i\leq p-1$, $H_i$ is a triangle and $H_{p}, H_{p+1}\in\{K_2,K_3\}$, then
$\chi({\rm KG}(G,P_2))=|E(G)|-\lfloor {2\over 3}n\rfloor$.
\end{thm}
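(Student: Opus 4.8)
The plan is to prove the two inequalities $\chi({\rm KG}(G,P_2))\le |E(G)|-\lfloor\frac23 n\rfloor$ and $\chi({\rm KG}(G,P_2))\ge |E(G)|-\lfloor\frac23 n\rfloor$ separately, working directly from the fact that a proper colouring of ${\rm KG}(G,P_2)$ is nothing but a cover of its vertex set (the copies of $P_2$ in $G$) by independent sets. First I would record the structure of these independent sets. Since being $P_2$-free means being a matching, ${\rm ex}(G,P_2)=\nu(G)$; and every independent set of ${\rm KG}(G,P_2)$ is either \emph{intersecting}, i.e.\ contained in the star $S_e$ of all copies of $P_2$ through a fixed edge $e$, or \emph{non-intersecting}, in which case, by the remark preceding the theorem, it has exactly three members, namely the three copies of $P_2$ lying inside a single triangle or claw of $G$ (call it a ``triple''). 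Thus a proper colouring is exactly a cover of all copies of $P_2$ by stars and triples.

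For the upper bound I would use the prescribed spanning subgraph $F$. Let $c_\triangle$ be the number of its components equal to $K_3$ and $c_2$ the number equal to $K_2$, so that $n=3c_\triangle+2c_2$, $|E(F)|=3c_\triangle+c_2$, and $c_2\le 2$. Assign one colour (a triple) to the three internal copies of $P_2$ of each $K_3$-component, and one colour (a star) to each edge of $G$ lying outside $F$. A copy of $P_2$ whose two edges both lie in $F$ must sit inside a single component (the components are vertex-disjoint), hence inside a $K_3$-component, and is coloured by its triple; every other copy uses an edge outside $F$ and is coloured by that edge's star. The number of colours is $c_\triangle+(|E(G)|-|E(F)|)=|E(G)|-2c_\triangle-c_2=|E(G)|-\lfloor\frac23 n\rfloor$, using $\lfloor\frac23(3c_\triangle+2c_2)\rfloor=2c_\triangle+c_2$ for $c_2\le 2$.

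The heart of the argument is the lower bound, which in fact holds for every graph $G$ (only the matching construction above uses the hypothesis). Take any proper colouring with $t$ colours and split its colour classes into the star-type ones (those possessing a common edge; for each pick one such edge and let $D$ be the set of chosen edges) and the triple-type ones, of which there are $r$. Put $J=G-D$. A copy of $P_2$ with both edges outside $D$ meets no star-class, so it is coloured by a triple-class; as the colouring partitions the copies and each triple has three members, the number of copies of $P_2$ living in $J$, namely $\sum_v\binom{d_v}{2}$ with $d_v$ the degree of $v$ in $J$, is at most $3r$. Feeding this into the pointwise identity $\frac16 d(d-1)-\left(\frac12 d-\frac23\right)=\frac16(d-2)^2\ge 0$ and summing over all $n$ vertices yields
$$r\ \ge\ \frac13\sum_{v}\binom{d_v}{2}\ \ge\ \sum_v\left(\frac12 d_v-\frac23\right)\ =\ |E(J)|-\frac23 n.$$
Hence $t\ge |D|+r=\bigl(|E(G)|-|E(J)|\bigr)+r\ge |E(G)|-\frac23 n$, and as $t$ is an integer, $t\ge |E(G)|-\lfloor\frac23 n\rfloor$.

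The step I expect to be most delicate is this reduction in the lower bound. One must argue carefully that classes of size one or two are always of star-type (any two non-adjacent copies of $P_2$ necessarily share an edge), so that only genuine triples can be forced to cover the ``hard'' copies of $P_2$ that live in $G-D$; and one must then spot the right convex weight $\frac16(d_v-2)^2$ which converts the crude count $\sum_v\binom{d_v}{2}\le 3r$ into the sharp edge estimate $|E(J)|-r\le \frac23 n$. This inequality is exactly the content one would otherwise try to extract through Lemma~\ref{ex} by bounding the alternating Tur\'an number; I would, however, carry out the direct double counting above, since it is cleaner and, unlike a naive use of the representation ${G\choose P_2}$, goes through even when $G$ is dense.
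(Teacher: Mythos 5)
Your proof is correct, and its overall strategy coincides with the paper's: the same colouring (one ``star'' colour per edge of $G$ outside the spanning subgraph, one ``triple'' colour per triangle component) gives the upper bound, and the lower bound is obtained by deleting one witness edge from each intersecting colour class and charging the copies of $P_2$ that survive in the remaining graph to the non-intersecting classes, each of which has at most three members. The only genuine divergence is in how the count of surviving copies of $P_2$ is converted into the bound $|E(G)|-\lfloor\frac{2}{3}n\rfloor$. The paper isolates this step as Lemma~\ref{countk12}: by Jensen's inequality a graph with $e$ edges on $n$ vertices contains at least $\frac{2e}{n}(e-\frac{n}{2})$ copies of $P_2$, and the proof then minimizes the quadratic $p(x)=\frac{2}{3n}x(x-\frac{n}{2})+|E(G)|-x$ in $x=|E(G)|-t$, the minimum being attained at $x=n$. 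You instead use the pointwise inequality $\frac{1}{3}{d\choose 2}\geq \frac{d}{2}-\frac{2}{3}$, i.e.\ $\frac{1}{6}(d-2)^2\geq 0$, and sum over the vertices, which gives $r\geq |E(J)|-\frac{2}{3}n$ directly and lets the term $|E(J)|$ cancel against $|D|=|E(G)|-|E(J)|$ with no optimization. The two computations have the same mathematical content (convexity of ${d\choose 2}$, with equality forced at $d=2$), but your version is slightly cleaner: it avoids the quadratic minimization, and it also sidesteps the paper's implicit assumption that the witness edges chosen from distinct intersecting classes are distinct (you only need the inequality $t\geq |D|+r$, which holds even when the same edge witnesses several classes). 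Your observation that the lower bound holds for every graph, the hypothesis being used only for the upper bound, is likewise consistent with the paper's argument.
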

\begin{proof}{
Set
$$T=\{e_1,e_2,\ldots,e_l\}=E(G)\setminus\displaystyle \displaystyle\cup_{i=1}^{p+1} E(H_i).$$
One can check that any connected subgraph of $G$ either has a nonempty intersection
with $T$ or is a subgraph of some $H_i$, for $1\leq i\leq p+1$. Also, one can see that if $P_2$ is a subgraph of $H_p$ (resp. $H_{p+1}$), then $H_p=K_3$ (resp. $H_{p+1}=K_3$). First, we show that there exists a proper coloring for ${\rm KG}(G,P_2)$
using $|E(G)|-\lfloor {2n\over 3}\rfloor$ colors.
If a subgraph $H$ of $G$ isomorphic to $P_2$ is a subgraph of $H_i$, then assign the color $l+i$ to $H$, where $l=|T|$.
Otherwise, let $j$ be the least integer such that $e_j\in E(H)$
and assign the color $j$ to $H$.
One can check that this coloring is a proper coloring
with $|E(G)|-\lfloor {2\over 3}n\rfloor$ colors.

Suppose that there exists a proper coloring of ${\rm KG}(G,P_2)$ with $\chi({\rm KG}(G,P_2))$
colors which has $t$ intersecting color classes and $s$ non-intersecting
color classes.
If $t\geq |E(G)|-\lfloor {2\over 3}n\rfloor$, then there is nothing to prove.
Therefore, suppose that $t< |E(G)|-\lfloor{2\over 3}n\rfloor$.
For each intersecting color class, remove an edge which appears in each member of this class to obtain
$\bar{G}$.
The graph $\bar{G}$ has $|E(G)|-t$ edges; and therefore, in view of Lemma~\ref{countk12}, it has at least
${2\left(|E(G)|-t\right)\left(|E(G)|-t-{n\over 2}\right)\over n}$
subgraphs isomorphic to $P_2$. Since every non-intersecting class has at most $3$ members, we have
$${2\left(|E(G)|-t\right)\left(|E(G)|-t-{n\over 2}\right)\over 3n}+t\leq\chi({\rm KG}(G,P_2)).$$
Now, set $x=|E(G)|-t> \lfloor{2\over 3}n\rfloor$.
Thus, we have $p(x)={2\over 3n}x(x-{n\over2})+|E(G)|-x\leq \chi({\rm KG}(G,P_2))$.
One can check that $p(x)$ takes its minimum in $x=n$, which is $|E(G)|-{2\over3}n$.
}\end{proof}

Hajnal and Szemeredi \cite{MR0297607} showed that for a graph $G$ if its minimum degree is at least
$(1-{1\over r})n$, then $G$ contains $\lfloor{n\over r}\rfloor$ vertex-disjoint copies of $K_r$.
 Corr{\'a}di and Hajnal \cite{MR0200185} investigated the maximum
number of vertex-disjoint cycles in a graph.
They showed that if $G$ is a graph of order at least $3k$ with minimum degree at least $2k$,
then $G$ contains $k$ vertex-disjoint cycles.
In particular, when the order of $G$ is exactly $3k$, then $G$ contains $k$ vertex-disjoint
triangles. Next, this result was extended as follows.
\begin{alphthm} {\rm \cite{MR1703268}} \label{discycle}
Let $G$ be a graph of order at least $3k$, where $k$ is a positive integer.
If for any pair of nonadjacent vertices $x$ and $y$
of $G$, we have ${\rm deg}_G(x)+{\rm deg}_G(y)\geq 4k - 1$, then $G$ contains $k$ vertex-disjoint cycles.
\end{alphthm}
Let $G$ be a graph such that ${\rm deg}_G(x)+{\rm deg}_G(y)\geq 4k-1$
for any pair of nonadjacent vertices $x$ and $y$ of $G$.
In the previous theorem, if we set $k=\lfloor{n\over 3}\rfloor$, then $n=3k+r$, where $0\leq r\leq 2$.
If $r=0$, then $G$ has $k$ vertex-disjoint cycles. Obviously, these cycles are triangles.

\begin{cor}
Let $G$ be a graph with $n$ vertices. If for any two nonadjacent vertices $x$ and $y$,
we have ${\rm deg}_G(x) + {\rm deg}_G(y)\geq {4n\over 3} - 1$, then $\chi({\rm KG}(G,P_2))=|E(G)|-\lfloor {2\over 3}n\rfloor$.
\end{cor}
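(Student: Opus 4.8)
The plan is to deduce the statement directly from Theorem~\ref{pathp2}: it suffices to exhibit a spanning subgraph of $G$ whose connected components are triangles together with at most two components drawn from $\{K_2,K_3\}$. Write $n=3k+r$ with $k=\lfloor n/3\rfloor$ and $r\in\{0,1,2\}$. For every pair of nonadjacent vertices $x,y$ we have ${\rm deg}_G(x)+{\rm deg}_G(y)\geq \frac{4n}{3}-1=4k+\frac{4r}{3}-1\geq 4k-1$, and $n\geq 3k$, so the hypothesis of Theorem~\ref{discycle} is met and $G$ contains $k$ vertex-disjoint cycles $C^{(1)},\dots,C^{(k)}$. Let $W$ be the set of vertices lying on no $C^{(j)}$; then $\sum_j |V(C^{(j)})|=n-|W|\geq 3k$, whence $|W|\leq r$ and $\sum_j(|V(C^{(j)})|-3)=r-|W|\leq r\leq 2$.

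When $r=0$ we get $|W|=0$ and each $C^{(j)}$ is forced to be a triangle, so the cycles already form a spanning triangle-factor and Theorem~\ref{pathp2} applies with both distinguished components equal to $K_3$, giving $\chi({\rm KG}(G,P_2))=|E(G)|-\lfloor \frac{2}{3}n\rfloor$. For $r\in\{1,2\}$ the two displayed inequalities show that all but at most two of the $C^{(j)}$ are triangles, that each non-triangle cycle is a $C_4$ or a $C_5$, and that at most $r\leq 2$ vertices lie in $W$. The remaining task is to repackage these few exceptional vertices (the $\leq 2$ leftover vertices of $W$ together with the vertices on the $\leq 2$ short cycles) into the two permitted small components while keeping the whole collection vertex-disjoint and spanning.

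I would carry this out by a short case analysis on the pair $(r,|W|)$, with the degree hypothesis doing the real work. That hypothesis is so strong that no vertex can have small degree: if a vertex $v$ were isolated, each of its nonneighbours would need degree at least $\frac{4n}{3}-1>n-1$, which is impossible; this forces every leftover vertex to have a neighbour in the triangle part, and it supplies the chords needed to split a short cycle. Concretely, for $r=1$ one either splits a single $C_4$ into two $K_2$'s, or, if instead a lone vertex $w\in W$ remains, breaks a triangle incident to $w$ into two disjoint edges, yielding $k-1$ triangles plus two $K_2$'s. For $r=2$ one aims for $k$ vertex-disjoint triangles plus one disjoint edge; when two vertices $w_1,w_2$ are left over and nonadjacent, the bound ${\rm deg}_G(w_1)+{\rm deg}_G(w_2)\geq 4k+2$ forces, by pigeonhole over the $k$ triangles, a triangle all of whose vertices are adjacent to one of them, which can then be used to absorb both exceptional vertices. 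In every case the resulting spanning subgraph has exactly the form required by Theorem~\ref{pathp2}, and that theorem finishes the proof.

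I expect the main obstacle to be the $r=2$ subcases in which the $k$ cycles already span $G$ and include a $C_5$ or two $C_4$'s: there no vertex is left over, so the triangle needed for the factor must be created using chords of these dense short cycles, and one has to verify that the degree condition always provides such a chord together with a compatible edge covering the remaining two vertices. Checking that this repartition can always be made vertex-disjoint and exhaustive is the delicate, though routine, point of the argument.
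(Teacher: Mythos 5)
Your overall strategy is the paper's: reduce to Theorem~\ref{pathp2} by exhibiting a spanning subgraph made of triangles plus at most two components from $\{K_2,K_3\}$, and obtain the triangles from Theorem~\ref{discycle}. Your $r=0$ case and both branches of your $r=1$ case (splitting a spanning $C_4$ into $2K_2$, or absorbing a lone leftover vertex $w$ by breaking a triangle that contains a neighbour of $w$ into two disjoint edges) coincide with the paper's argument and are fine.

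The gap is the case $r=2$, which you do not actually prove: you handle only the configuration ``$k$ triangles plus two leftover nonadjacent vertices'' (by a pigeonhole that does work), and you explicitly defer the configurations in which the $k$ cycles already span $G$ --- one $C_5$, or two $C_4$'s, or one $C_4$ plus a leftover vertex --- calling them ``delicate, though routine.'' They are not routine under the stated hypothesis: the only information available is a degree--sum bound on \emph{nonadjacent} pairs, which gives no direct control over chords of a spanning $C_5$ or cross-edges between two $C_4$'s; each of these subcases would need its own counting argument, and none is supplied. The paper sidesteps the entire case with one trick you are missing: for $n=3k+2$, add a new vertex $z$ adjacent to all of $V(G)$ to form $G'$ on $3(k+1)$ vertices; any nonadjacent pair of $G'$ lies in $G$ and satisfies ${\rm deg}_{G'}(x)+{\rm deg}_{G'}(y)\geq \frac{4n}{3}-1+2\geq 4(k+1)-1$, so Theorem~\ref{discycle} gives $k+1$ vertex-disjoint triangles, necessarily a triangle factor of $G'$; deleting $z$ leaves $k$ disjoint triangles and one $K_2$ spanning $G$. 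Replacing your unfinished case analysis by this reduction closes the gap; as written, the proof is incomplete.
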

\begin{proof}{
We show that $G$ has vertex-disjoint
subgraphs $H_1,H_2,\ldots,H_p$ such that $V(G)=\displaystyle\cup_{i=1}^tV(H_i)$
and for any $1\leq i\leq p-1$, $H_i$ is a triangle and $H_p\in\{K_2,K_3,2K_2\}$.

Assume that $n=3k+r$, where $0\leq r\leq 2$.
In view of Theorem~\ref{discycle}, $G$ has $k$ vertex-disjoint cycles.
We consider three different cases.
\begin{itemize}
\item[Case 1:]
If $r=0$, then $n=3k$ and $G$ has $p=k$ vertex-disjoint cycles $H_1,H_2,\ldots,H_p$
and all of these cycles should be triangles. Also, clearly we have $V(G)=\displaystyle\cup_{i=1}^tV(H_i)$.
\item[Case 2:]
For $r=1$, the graph $G$ has $p=k$ vertex-disjoint cycles where
$k-1$ of these cycles, say $H_1,H_2,\ldots,H_{k-1}$,
are triangles and the other cycle, say $C$, can be a $C_4$ or a triangle.

If $C$ is a
$C_4$, then remove two nonadjacent edges of it to obtain $H_{k}\cong 2K_2$.
Otherwise, if $C$ is a triangle, then assume that $z$ is a vertex that is not in
$V(C)\cup(\displaystyle\cup_{i=1}^{k-1}V(H_i))$.
Consider $u\in V(G)$ such that $uz\in E(G)$. Without loss of generality,
we can suppose that $V(C)=\{u,v,w\}$.
Now, consider $H_k\cong 2K_2$ with the vertex set $\{z,u,v,w\}$ and the edge set $\{zu,vw\}$.
One can check that $V(G)=\displaystyle\cup_{i=1}^tV(H_i)$.

\item[Case 3:]
If $r=2$, we add a new vertex $z$ to $G$ and join it to all vertices of $G$
to obtain the graph  $G'$.
The graph $G'$ has $3(k+1)$ vertices and for any two nonadjacent vertices $x$ and $y$,
${\rm deg}_{G'}(x)+{\rm deg}_{G'}(y)\geq 4{n\over 3} - 1+2\geq 4(k+1)-1$.
Therefore, by Theorem~\ref{discycle}, $G'$ has $k+1=p$ vertex-disjoin triangles.
By removing the vertex $z$, $G$ has a spanning subgraph $H$ such that
it has $k+1$ connected components where $k$ of them are
triangles and one of them is $K_2$.
\end{itemize}
Now,
in view of Theorem~\ref{pathp2}, the assertion follows.
}
\end{proof}

The degree condition mentioned in the aforementioned corollary cannot be dropped. To see this, one can check that
$\chi({\rm KG}(C_5,P_2)))=3\not =|E(C_5)|-\lfloor {10\over 3}\rfloor$.
\subsection{Concluding Remarks}
In Lemma~\ref{lowup}, we introduced a lower and upper bound for the chromatic number of graphs
in terms of the generalized Tur\'an number. It can be of interest
to find necessary and sufficient conditions to know when equality holds in both cases. In this regard, in view of these bounds,
one can reformulate several interesting results or conjectures. Here, we present some of them.

\begin{enumerate}
\item The Kneser graph ${\rm KG}(nK_2, kK_2)$ (Lov{\'a}sz~\cite{MR514625}):
$$\chi({\rm KG}(nK_2, kK_2))=|E(nK_2)|-2  {\rm ex}(nK_2, kK_2)=n-2k+2.$$

\item The Schrijver graph ${\rm KG}(C_n, kK_2)$ (Schrijver~\cite{MR512648}):
$$\chi({\rm KG}(C_n, kK_2))=|E(C_n)|-{\rm ex}(C_n, kK_2)=n-2k+2.$$

\item The Kneser hypergraph graph ${\rm KG}^r(nK_2, kK_2)$ (Alon, Frank, and Lov{\'a}sz~\cite{MR857448}):
$$\chi({\rm KG}^r(nK_2, kK_2))=\left\lceil{|E(nK_2)|-r.{\rm ex}(nK_2, kK_2)\over r-1}\right\rceil
=\left\lceil{n-r(k-1)\over r-1}\right\rceil.$$

\item The generalized Kneser graph ${\rm KG}(K_n,K_k)$ (Frankl~\cite{MR797510}):
$$\chi({\rm KG}(K_n,K_k))=|E(K_n)|-{\rm ex}(K_n,K_k)=
(k-1){s \choose 2}+rs,$$
where $n=(k-1)s+r$, $0\leq r < k-1$, and $n$ is sufficiently large.

\end{enumerate}

We should mention that the chromatic number of the generalized Kneser graph ${\rm KG}(K_n,K_3)$ (i.e., ${\rm KG}(n,3,1)$) was determined by Tort~\cite{MR689810}. Also, Frankl~\cite{MR797510} introduced the
following conjecture about the chromatic number of generalized Kneser graphs.

\begin{alphcon}{\rm (Frankl~\cite{MR797510})}
Let $n, k$ and $s$ be positive integers, where $k>s\geq 2$ and $n\geq 2k-s+1$. If $n$ is sufficiently large, then
$$\chi({\rm KG}(K_n^s,K_k^s))=|E(K_n^s)|-{\rm ex}(K_n^s, K_k^s),$$
where the complete hypergraph $K_n^s$ consists of all $s$-subsets of $[n]$.
\end{alphcon}

Also, in \cite{MR3073134}, several conjectures and problems are introduced.
Again, these problems can been reformulated in terms of the generalized Tur\'an number as follows.

\begin{alphcon}\label{oddcy}{\rm \cite{MR3073134}}
If $k$ is an odd integer and $n$ is sufficiently large, then
$$\chi({\rm KG}(K_n,C_k))=|E(K_n)|-{\rm ex}(K_n,C_n)=\lfloor {(n-1)^2 \over 4}\rfloor .$$
\end{alphcon}

\begin{alphprb}\label{evency}{\rm \cite{MR3073134}}
Let $k$ be an even integer. Does
$${n\choose 2}-O(n^{1+{2\over k}}) \leq \chi({\rm KG}(K_n,C_k))\leq {n\choose 2}-\Omega(n^{1+{1\over k}})$$
hold{\rm ?}
\end{alphprb}

\begin{alphprb}\label{fourcy}{\rm \cite{MR3073134}}
Is the following statement true{\rm ?}
$$\chi({\rm KG}(K_n,C_4))= {n\choose 2}-{1\over 2}n^{3\over 2}+o(n^{3\over 2}).$$
\end{alphprb}

It is known that ${\rm ex}(K_n,C_4)={1\over 2}n^{3\over 2}-o(n^{3\over 2})$. Hence, it may be of interest to know whether the equality $\chi({\rm KG}(K_n,C_4))=|E(K_n)|-{\rm ex}(K_n,C_4)$ holds provided that $n$ is sufficiently large.

\begin{alphprb}\label{cyprime}{\rm \cite{MR3073134}}
If $q$ is a prime power and $n=q^2 +q+1$, does
$$\chi({\rm KG}(K_n,C_4))= |E(K_n)|-{\rm ex}(K_n,C_4)={q^2 +q+1\choose 2}-{1\over 2}q(q+1)^2$$
hold{\rm ?}
\end{alphprb}

It is known that
$\Omega(n^{1+{1\over k}}) \leq {\rm ex}(K_n,C_k)\leq O(n^{1+{2\over k}})$, so in view of Lemma~\ref{lowup} we have the following proposition which gives an affirmative answer to Problem~\ref{evency}.
\begin{pro}
Let $k$ be an even integer. We have
$${n\choose 2}-O(n^{1+{2\over k}}) \leq \chi({\rm KG}(K_n,C_k))\leq {n\choose 2}-\Omega(n^{1+{1\over k}}).$$
\end{pro}

We can generalize the definition of Kneser representation by considering
labeled hypergraphs, i.e.,  we assign some labels to the hyperedges or vertices of $H$ and ${\cal F}$.
In this terminology, the hypergraph
${H\choose {\cal F}}$
has $E(H)$ as its vertex set and each
subhypergraph of $H$ isomorphic to a member of ${\cal F}$ forms a hyperedge, where
any isomorphism should preserve the labels. One can define the hypergraphs ${\rm KG}^r(H, {\cal F})$
similar to unlabeled ones. This new definition helps us to introduce some appropriate representations
for some families of graphs. For instance, here we consider the Cartesian sum of Kneser graphs.
{\it The Cartesian sum} $G\oplus H$ of two graphs $G$ and $H$ has the vertex set $V(G)\times V(H)$ and two vertices
$(u,v)$ and $(a,b)$ are adjacent, if either $u$ is adjacent to $a$ or $v$ is adjacent to $b$. The chromatic number of Cartesian sum of
graphs has been studied in several papers, see~\cite{MR2464883, MR0223273}. In general, it seems that it is~not easy to evaluate
the chromatic number of the Cartesian sum of two graphs. One can check that $\chi(G\oplus H) \leq \chi(H)\chi(G)$.
In \cite{MR2464883}, this bound
was improved and also the chromatic number of the Cartesian sum of circular complete graphs was determined.
As a natural question, it can be of interest to know the chromatic number of Cartesian sum of Kneser graphs. In particular, we
are interested in finding the chromatic number of $\chi({\rm KG}(m,2)\oplus {\rm KG}(n,2))$. Note that ${\rm KG}(m,p)\oplus {\rm KG}(n,p)$
is isomorphic to ${\rm KG}(K_{m,n},K_{p,p})$. The problem of finding the chromatic number of ${\rm KG}(K_{m,n},K_{2,2})$
can be considered as a twin of Problem~\ref{fourcy}. In view of Lemma~\ref{lowup}, we have
$$ mn-2  {\rm ex}(K_{m,n},K_{p,p}) \leq \chi({\rm KG}(m,p)\oplus {\rm KG}(n,p)) \leq mn-{\rm ex}(K_{m,n},K_{p,p}).$$
One can see that the graph
${\rm KG}(\vec{K}_{m,n},\vec{K}_{p,q})$ is isomorphic to ${\rm KG}(m,p)\oplus {\rm KG}(n,q)$, where
the labeled graph $\vec{K}_{m,n}$ is obtained from the complete bipartite graph $K_{m,n}$ by assigning the label
one to the vertices of the part with size $m$ and the label two to the others (or by assigning the same direction to all edges
form the part of size $m$ to the other part). Note that if $p\not = q$ and $\min\{m, n\}\geq \max\{p, q\}$, then  the graph ${\rm KG}(K_{m,n},K_{p,q})$ is~not
isomorphic to the graph ${\rm KG}(\vec{K}_{m,n},\vec{K}_{p,q})$.

\begin{qu}\label{Kneser}
Assume that $m, n, p,$ and $q$ are positive integers.  What are the values
of $\chi({\rm KG}(K_{m,n},K_{p,q}))$ and $\chi({\rm KG}(\vec{K}_{m,n},\vec{K}_{p,q}))$ {\rm ?}
\end{qu}
{\bf Acknowledgement:} The authors gratefully acknowledge for many stimulating conversations and the many helpful suggestions of Professor Carsten~Thomassen during the preparation of the paper.
Also, they wish to thank Dr.~Saeed~Shaebani for his useful comments. A part of this paper was written while Hossein Hajiabolhassan was visiting School of Mathematics, Institute for Research in Fundamental Sciences~(IPM). He acknowledges the support of IPM (No. $94050128$).
Furthermore, the authors would like to thank Skype for sponsoring their endless conversations in two countries.

\def\cprime{$'$} \def\cprime{$'$}


\end{document}